\setlist[enumerate]{nolistsep, label = (\alph*), ref=(\text{\alph*)}}
\setlist[itemize]{itemsep=3pt}
\renewcommand{\Im}{\mathop{\text{Im}}}
\renewcommand{\phi}{\varphi}
\renewcommand{\ge}{\geqslant}
\renewcommand{\le}{\leqslant}
\newcommand{\K}{\mathbb{K}}
\newcommand{\A}{\mathbb{A}}
\newcommand{\Q}{\mathbb{Q}}
\newcommand{\Z}{\mathbb{Z}}
\newcommand{\N}{\mathbb{Z}_{>0}}
\newcommand{\G}{\mathbb{G}}
\newcommand{\Zgezero}{\mathbb{Z}_{\geqslant 0}}
\newcommand{\g}{\mathcal{g}}
\newcommand{\RR}{R(\g)}
\newcommand{\frK}{\mathfrak{K}}
\newcommand{\divid}{\mid}
\newcommand{\rep}{\widehat}
\newcommand{\dCb}{\delta_{C, \beta}}
\newcommand{\lici}{{l_{ic_i}}}
\DeclareMathOperator{\Ker}{Ker}
\DeclareMathOperator{\rdeg}{\rep \deg}
\theoremstyle{plain}
\newtheorem{lemma}{Lemma}
\newtheorem{proposition}{Proposition}
\newtheorem{theorem}{Theorem}
\newtheorem{corollary}{Corollary}
\theoremstyle{definition}
\newtheorem*{definition}{Definition}
\newtheorem{construction}{Construction}
\newtheorem{example}{Example}
\newtheorem{conjecture}{Conjecture}
\newtheorem{problem}{Problem}
\theoremstyle{remark}
\begin{document}

\title[Homogeneous derivations of trinomial algebras]{Homogeneous locally nilpotent derivations of non-factorial trinomial algebras}
\author{Yulia~Zaitseva}
\date{}
\address{Lomonosov Moscow State University, Faculty of Mechanics and Mathematics, 
Department of Higher Algebra, Leninskie Gory~1, Moscow, 119991 Russia}
\email{yuliazaitseva@gmail.com}
\subjclass[2010]{Primary 13N15, 14R20; \ Secondary 13A50, 14J50}
\keywords{Affine hypersurface, torus action, graded algebra, derivation}
\thanks{The work was supported by the Foundation for the Advancement of Theoretical Physics and Mathematics ``BASIS''}

\begin{abstract}
\noindent
We describe homogeneous locally nilpotent derivations of the algebra of regular functions for a class of affine trinomial hypersurfaces. This class comprises all non-factorial trinomial hypersurfaces.
\end{abstract}

\maketitle

\section{Introduction}

Let $\K$ be an algebraically closed field of characteristic zero and $R$ an algebra over~$\K$. 
A~derivation of~$R$ is a $\K$-linear map $\delta\colon R \to R$ satisfying the Leibniz rule: 
\hbox{$\delta(fg) = \delta(f)g + f\delta(g)$} for all~$f, g \in R$. 
A derivation $\delta$ is called locally nilpotent if for every $f \in R$ there is an $m \in \N$ such that $\delta^m(f) = 0$. 

Let $X$ be an irreducible affine algebraic variety over~$\K$ and $\G_a = (\K, +)$ the additive group of the ground field. 
Locally nilpotent derivations of the algebra $\K[X]$ are known to be in one-to-one correspondence with regular $\G_a$-actions on the variety~$X$, see e.g.~\cite[Section~1.5]{Fr2006}. 

Suppose that $R$ is graded by a finitely generated abelian group~$K$: $$R = \bigoplus \limits_{w \in K} R_w.$$
A derivation $\delta \colon R \to R$ is said to be homogeneous if it maps homogeneous elements to homogeneous ones. 
In such a case, there exists an element $\deg \delta \in K$ satisfying \hbox{$\delta(R_w) \subseteq R_{w + \deg \delta}$} for all $w \in K$. 
The element $\deg \delta \in K$ is called the degree of~$\delta$. 

Recall that an affine algebraic group is called a quasitorus if it is isomorphic to the direct product of a torus and a finite abelian group. 
Suppose that a quasitorus~$H$ acts on the variety~$X$. 
Such an action corresponds to a grading on the algebra~$\K[X]$ by the group of characters~$K$ of the quasitorus~$H$: 
$$
\K[X] = \bigoplus \limits_{w \in K} \K[X]_w, \quad \text{where}\quad \K[X]_w = \{f \mid h\circ f = w(h)f \quad\forall h \in H\}.
$$
It is easily shown that a locally nilpotent derivation of~$\K[X]$ is homogeneous with respect to this grading 
if and only if the quasitorus~$H$ normalizes the corresponding $\G_a$-action on~$X$. 
A description of homogeneous locally nilpotent derivations enables us to describe the automorphism group of an algebraic variety, see e.g.~\cite[Theorem~5.5]{ArHaHeLi2012}, \cite[Theorem~5.5]{ArGa2017}. 

Fix positive integers $n_0, n_1$, $n_2$ and let $n = n_0 + n_1 + n_2$. 
For each $i = 0, 1, 2$, fix a tuple $l_i = (l_{ij} \mid j = 1, \ldots, n_i)$ of positive integers $l_{ij}$ 
and define a monomial $T_i^{l_i} = T_{i1}^{l_{i1}} \ldots T_{in_i}^{l_{in_i}}$ 
in the polynomial algebra $\K[T_{ij},\, i = 0, 1, 2,\, j = 1, \ldots, n_i]$. 
By a trinomial we mean a polynomial 
$$
\g = T_0^{l_0} + T_1^{l_1} + T_2^{l_2}.
$$
A trinomial hypersurface $X(\g)$ is the zero set $\{\g = 0\}$ in the affine space~$\A^n$. 
By~$\RR = \K[X(\g)]$ we denote the algebra of regular functions on $X(\g)$. 

Our motivation to study trinomials comes from toric geometry. 
Consider an effective action ${T \times X \to X}$ of an algebraic torus~$T$ on an irreducible variety~$X$. 
The complexity of such an action is the codimension of a general $T$-orbit in $X$. It equals $\dim X - \dim T$. 

Actions of complexity zero are actions with an open $T$-orbit. 
A normal variety admitting a torus action with an open orbit is called a toric variety. 
If $X$ is a toric (not necessary affine) variety with the acting torus~$T$, then $\G_a$-actions on~$X$ normalized by~$T$ can be described in terms of Demazure roots of the fan corresponding to~$X$; see~\cite{De1970}, \cite[Section~3.4]{Od1988} for the original approach and~\cite{Li22010, ArLi2012, ArHaHeLi2012} for generalizations. 

Let $T \times X \to X$ be a torus action of complexity one. 
A description of $\G_a$-actions on~$X$ normalized by~$T$ in terms of proper polyhedral divisors 
may be found in~\cite{Li12010} and~\cite{Li22010}. 
It is an interesting problem to find their explicit form in concrete cases. 

The study of toric varieties is related to binomials, see e.g.~\cite[Chapter~4]{St1996}. 
At the same time, Cox rings establish a close relation between torus actions of complexity one and
trinomials, see~\cite{HaSu2010, HaHeSu2011, HaHe2013, ArHaHeLi2012, HaWr2017}. 
In particular, any trinomial hypersurface admits a torus action of complexity one.

In this paper we study locally nilpotent derivations of~$\RR$ that are homogeneous with respect to the ''finest'' grading. 
This grading is defined in Construction~\ref{gr_constr}. 
The ''finest'' grading corresponds to an effective action of a quasitorus~$H$ on~$X(\g)$. 
The~neutral component of the quasitorus~$H$ is a torus of dimension~$n - 2$. It acts on~$X(\g)$ with complexity one. 

The weight monoid of a graded algebra $R = \bigoplus \limits_{w \in K} R_w$ is the set ${S = \{w \in K \mid R_w \ne 0\}}$. 
The weight cone of $R$ is the convex cone $\omega$ in the rational vector space ${K_\Q = K \otimes_\Z \Q}$
generated by the weight monoid $S$. 
A homogeneous derivation is said to be primitive if its degree does not lie in the weight cone $\omega \subseteq K_\Q$. 
It is clear that every primitive derivation is locally nilpotent. 
The converse is false, see Example~\ref{ex_th_1332}. 
An explicit description of all primitive derivations of~$\RR$ that are homogeneous with respect to the ''finest'' grading is given in~\cite[Theorem~4.4]{ArHaHeLi2012}\footnote{In~\cite{ArHaHeLi2012}, a more general class of affine varieties defined by a system of trinomials is studied, see~\cite[Construction~3.1]{ArHaHeLi2012} for details.}. 
Such derivations have the form $h \dCb$, where $\dCb$ are some special primitive derivations of~$\RR$ (see~\cite[Construction~4.3]{ArHaHeLi2012} or Construction~\ref{dCb_constr} below), and $h$ is a homogeneous element in the kernel of~$\dCb$. 
We call derivations of the form $h \dCb$ elementary. 

We expect that all homogeneous locally nilpotent derivations of trinomial algebras are elementary (see Conjecture~\ref{conj} in Section~\ref{open}). 
We verify this conjecture in Theorem~\ref{th_01m} for some types of trinomials. 
In particular, the conjecture is proved for all non-factorial trinomial hypersurfaces (Corollary~\ref{cor_1}). 
Corollary~\ref{cor_2} gives a criterion for existence of homogeneous locally nilpotent derivations of trinomial algebras. 
It enables us to give a new proof of criteria for rigidity of factorial trinomial hypersurfaces, which was proved earlier in~\cite[Theorem~1]{Ar2016} (see Corollary~\ref{cor_3}).

\smallskip

The author is grateful to her supervisor Ivan~Arzhantsev for posing the problem and permanent support and to Sergey~Gaifullin for useful discussions and comments.

\section{Preliminaries}

In this section the ''finest'' $\deg$-grading and elementary derivations of a trinomial algebra are defined. 
\begin{construction}
\label{gr_constr}
Fix positive integers $n_0$, $n_1$, $n_2$ and let $n = n_0 + n_1 + n_2$. 
For each $i = 0, 1, 2$, fix a tuple of positive integers $l_i = (l_{ij} \mid j = 1, \ldots, n_i)$ and 
define a monomial $T_i^{l_i} = T_{i1}^{l_{i1}} \ldots T_{in_i}^{l_{in_i}} \in \K[T_{ij},\, i = 0, 1, 2,\, j = 1, \ldots, n_i]$.  
By a \textit{trinomial} we mean a polynomial of the form 
$$
\g = T_0^{l_0} + T_1^{l_1} + T_2^{l_2}.
$$
A \textit{trinomial hypersurface} $X(\g)$ is the zero set $\{\g = 0\}$ in the affine space $\A^n$. 
It can be checked that the polynomial $\g$ is irreducible, hence the 
algebra $\RR := \K[T_{ij}]\,/\,(\g)$ of regular functions on $X(\g)$ has no zero divisors. 
We call such algebras $\RR$ \textit{trinomial}.  
We~use the same notation for elements of $\K[T_{ij}]$ and their projections to~$\RR$.

Following~\cite{HaHe2013}, we build a $2 \times n$ matrix $L$ from the trinomial $\g$ as follows: 
$$L = 
\begin{pmatrix}
-l_0 & l_1 & 0 \\
-l_0 & 0 & l_2 
\end{pmatrix}.
$$
Let $L^*$ be the transpose of~$L$. 
Denote by $K$ the factor group $K = \Z^n\,/\,\Im L^*$ and by $Q \colon \Z^n \to K$ the projection. 
Let $e_{ij} \in \Z^n$, $i = 0, 1, 2$, $j = 1, \ldots, n_i$, be the canonical basis vectors. 
The equalities 
\begin{equation}
\label{def_deg}
\deg T_{ij} = Q(e_{ij})
\end{equation}
define a $K$-grading on the algebra~$\K[T_{ij}]$.

Since $\g$ is a homogeneous polynomial of degree
$$\mu = l_{i1} Q(e_{i1}) + \ldots + l_{in_i} Q(e_{in_i})$$
for some $\mu \in K$ and any $i = 0, 1, 2$, it follows that  equalities~(\ref{def_deg}) define a $K$-grading on~$\RR = \K[T_{ij}]\,/\,(\g)$ as well. 
By ''$\deg$'' we denote the degree with respect to this grading. 
\end{construction}

\begin{example}
\label{ex_gr_1332}
Let $\g = T_{01} T_{02}^3 + T_{11}^3 + T_{21}^2$, i.e., 
$L = \begin{pmatrix} -1 & -3 & 3 & 0 \\ -1 & -3 & 0 & 2 \end{pmatrix}$ (see~\cite[Example 3.3]{ArHaHeLi2012}). 

Since the matrix~$L$ can be reduced by integer elementary row and column operations to the form $\begin{pmatrix} 1 & 0 & 0 & 0 \\ 0 & 1 & 0 & 0 \end{pmatrix}$, it follows that the grading group $K = \Z^4\,/\,\Im L^*$ is isomorphic to~$\Z^2$. 
The grading can be given explicitly via
$$
\deg T_{01} = \begin{pmatrix} -3 \\ 3 \end{pmatrix}, \quad
\deg T_{02} = \begin{pmatrix} 1 \\ 1 \end{pmatrix}, \quad
\deg T_{11} = \begin{pmatrix} 0 \\ 2 \end{pmatrix}, \quad
\deg T_{21} = \begin{pmatrix} 0 \\ 3 \end{pmatrix}.
$$
It is unique up to the choice of basis in $\Z^4$. 
\end{example}

\begin{example}
\label{ex_gr_11112}
Let $\g = T_{01}T_{02} + T_{11}T_{12} + T_{21}^2$ and 
$L = \begin{pmatrix} -1 & -1 & 1 & 1 & 0 \\ -1 & -1 & 0 & 0 & 2 \end{pmatrix}$. 

The matrix $L$ can be reduced by elementary row and column operations to the form $\begin{pmatrix} 1 & 0 & 0 & 0 & 0 \\ 0 & 1 & 0 & 0 & 0 \end{pmatrix}$, 
hence the group $K = \Z^5\,/\,\Im L^*$ is isomorphic to $\Z^3$. 
We can define the grading explicitly via
$$
\deg T_{01} = \begin{pmatrix} 2 \\ 0 \\ 1\end{pmatrix}, \;
\deg T_{02} = \begin{pmatrix} 0 \\ 2 \\ -1\end{pmatrix}, \;
\deg T_{11} = \begin{pmatrix} 2 \\ 2 \\ 1\end{pmatrix}, \;
\deg T_{12} = \begin{pmatrix} 0 \\ 0 \\ -1\end{pmatrix}, \;
\deg T_{21} = \begin{pmatrix} 1 \\ 1 \\ 0\end{pmatrix}.
$$
\end{example}

\begin{example}
\label{ex_gr_222}
Let $\g = T_{01}^2 + T_{11}^2 + T_{21}^2$ and 
$L = \begin{pmatrix} -2 & 2 & 0 \\ -2 & 0 & 2 \end{pmatrix}$. 

Since $L$ can be reduced to the form $\begin{pmatrix} 2 & 0 & 0 \\ 0 & 2 & 0 \end{pmatrix}$, 
it follows that $K = \Z^3\,/\,\Im L^*$ is isomorphic to $\Z \oplus \Z_2 \oplus \Z_2$. 
Denote $\Z_2 = \{[0]_2, [1]_2\}$; the grading can be given explicitly via
$$
\deg T_{01} = \begin{pmatrix} 1 \\ [0]_2 \\ [0]_2 \end{pmatrix}, \quad
\deg T_{11} = \begin{pmatrix} 1 \\ [1]_2 \\ [0]_2 \end{pmatrix}, \quad
\deg T_{21} = \begin{pmatrix} 1 \\ [0]_2 \\ [1]_2 \end{pmatrix}.
$$
\end{example}

The following lemma shows that the constructed $\deg$-grading is the ''finest'' grading on the algebra~$\RR$ such that all generators~$T_{ij}$ are homogeneous. 

\begin{lemma}
\label{deg_lem}
Let $\deg$ be the $K$-grading on the algebra~$\RR$ defined in Construction~\ref{gr_constr} and $\rdeg$ be any $\rep K$-grading on~$\RR$ by some abelian group~$\rep K$ such that all generators~$T_{ij}$ are homogeneous. 
Then $\rdeg = \psi \circ \deg$ for some homomorphism $\psi \colon K \to \rep K$. In particular, any derivation that is homogeneous with respect to $\deg$-grading is homogeneous with respect to $\rdeg$-grading too. 
\end{lemma}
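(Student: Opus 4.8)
The plan is to recognise both gradings as homomorphisms out of $\Z^n$ and to read off the factorisation from the universal property of the quotient $K = \Z^n / \Im L^*$. First I would observe that giving a $\rep K$-grading on the free algebra $\K[T_{ij}]$ for which every $T_{ij}$ is homogeneous is the same as giving a group homomorphism $\Phi \colon \Z^n \to \rep K$, namely the one determined by $\Phi(e_{ij}) = \rdeg T_{ij}$; under it a monomial $T^a$ with exponent vector $a \in \Zgezero^n$ has $\rdeg T^a = \Phi(a)$. Since the ideal $(\g)$ is principal, it is homogeneous if and only if its generator $\g$ is homogeneous, and this is exactly the condition under which the $\rdeg$-grading on $\K[T_{ij}]$ descends to $\RR$. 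So the content of the hypothesis is that $\g$ is $\Phi$-homogeneous.

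Next I would translate the homogeneity of $\g = T_0^{l_0} + T_1^{l_1} + T_2^{l_2}$ into a statement about $\Ker \Phi$. The three summands have exponent vectors $\sum_j l_{0j}e_{0j}$, $\sum_j l_{1j}e_{1j}$, $\sum_j l_{2j}e_{2j}$, and $\g$ is $\Phi$-homogeneous precisely when $\Phi$ sends these three vectors to a common value, i.e.\ when $\Phi$ annihilates the two differences
$$
\Big(\!-\!\sum_j l_{0j}e_{0j} + \sum_j l_{1j}e_{1j}\Big), \qquad
\Big(\!-\!\sum_j l_{0j}e_{0j} + \sum_j l_{2j}e_{2j}\Big).
$$
The key observation is that these two vectors are exactly the two rows of the matrix~$L$, hence the columns of $L^*$; they therefore generate $\Im L^*$. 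Consequently $\g$ being $\Phi$-homogeneous is equivalent to $\Im L^* \subseteq \Ker \Phi$.

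Finally, since $K = \Z^n / \Im L^*$ with projection $Q$, the inclusion $\Im L^* \subseteq \Ker \Phi$ lets me invoke the universal property of the quotient to obtain a unique homomorphism $\psi \colon K \to \rep K$ with $\Phi = \psi \circ Q$. Then for every generator $\rdeg T_{ij} = \Phi(e_{ij}) = \psi(Q(e_{ij})) = \psi(\deg T_{ij})$, and because a grading is determined by its values on the generators (every homogeneous element being a $\K$-combination of monomials of a fixed degree), this forces $\rdeg = \psi \circ \deg$ throughout $\RR$. For the last assertion I would argue by coarsening: the identity $\rdeg = \psi \circ \deg$ means that each $\rdeg$-homogeneous component is a sum of $\deg$-homogeneous components, so if $\delta$ raises every $\deg$-degree by a fixed $d = \deg\delta \in K$, then it raises every $\rdeg$-degree by the fixed element $\psi(d)$; hence $\delta$ is $\rdeg$-homogeneous, of degree $\psi(d)$.

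I do not expect a serious obstacle here: the argument is in essence the universal property of a cokernel. The only points requiring care are the bookkeeping of the descent to the quotient (which rests on $(\g)$ being principal, so that a single homogeneity condition on $\g$ suffices) and the identification of the two exponent-difference vectors with the rows of~$L$; once the latter is checked, the factorisation through $K$ is automatic.
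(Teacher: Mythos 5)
Your proof is correct and follows essentially the same route as the paper's: both identify the grading with a homomorphism out of $\Z^n$, observe that homogeneity of $\g$ is exactly the vanishing on the rows of $L$ (hence on $\Im L^*$), and factor through the quotient $K = \Z^n/\Im L^*$. Your write-up is merely a more explicit version (via the universal property of the cokernel) of the paper's compressed argument that any such grading ``is obtained by further factorization'' of $K$.
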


\begin{proof}
The weights $\rep w_{ij} = \rdeg T_{ij}$ determine a well-defined grading on the algebra~$\RR$ if and only if the polynomial~$\g$ is homogeneous with respect to the $\rdeg$-grading, i.e., the sums $l_{i1}\rep w_{i1} + \ldots + l_{in_i}\rep w_{in_i} \in \rep K$ are the same for all $i = 0, 1, 2$. 
The factorization of $\Z^n = \langle e_{ij}\rangle$ by $\Im L^*$ provides these equalities for the images of $e_{ij}$ in~$K$. 
Any other grading on~$\RR$ such that generators~$T_{ij}$ are homogeneous is obtained by further factorization of the group $K = \Z^n\,/\,\Im L^*$, and this factorization is the required map~$\psi$. 
\end{proof}

The following construction is given in~\cite{ArHaHeLi2012} and is described below in the case of trinomial hypersurfaces 
(in notation of~\cite{ArHaHeLi2012} that is $r = 2$, 
$A = \left(\begin{smallmatrix} 0 & -1 & 1 \\ 1 & -1 & 0 \end{smallmatrix}\right)$, 
$\g = g_I$ for $I = \{0, 1, 2\}$, $\RR = R(A, P_0)$).

\begin{construction}
\label{dCb_constr} 
Let us define a derivation $\dCb$ of~$\RR$. 
The input data are
\begin{itemize}
\item a sequence $C = (c_0, c_1, c_2)$, where $c_i \in \Z$, $1 \le c_i \le n_i$; 
\item a vector $\beta = (\beta_0, \beta_1, \beta_2)$ such that $\beta_i \in \K$, $\beta_0 + \beta_1 + \beta_2 = 0$. 
\end{itemize}
It is clear that for $\beta \ne 0$ as above either all entries $\beta_i$ differ from zero or there is a unique~$i_0$ with $\beta_{i_0} = 0$. 
According to these cases, we put further conditions and define: 
\begin{enumerate}[label = (\roman*), ref=(\roman*)]
\item \label{dCbconstr_1}
if $\beta_i \ne 0$ for all $i = 0, 1, 2$ and there is at most one $i_1$ with $l_{i_1c_{i_1}} > 1$, then we set 
$$
\dCb(T_{ij}) = 
\begin{cases}
\beta_i \prod \limits_{k \ne i} \frac{\partial T_k^{l_k}}{\partial T_{kc_k}},    &j = c_i,\\
0, &j \ne c_i;
\end{cases}
$$ 
\item \label{dCbconstr_2}
if $\beta_{i_0} = 0$ for a unique $i_0$ and there is at most one $i_1$ with $i_1 \ne i_0$ and $l_{i_1c_{i_1}} > 1$, then 
$$
\dCb(T_{ij}) = 
\begin{cases}
\beta_i \prod \limits_{k \ne i, i_0} \frac{\partial T_k^{l_k}}{\partial T_{kc_k}},    &j = c_i,\\
0, &j \ne c_i.
\end{cases}
$$
\end{enumerate}
\end{construction}

These assignments define a map~$\dCb$ on generators~$T_{ij}$.
It can be extended uniquely to a~derivation on~$\K[T_{ij}]$ by Leibniz rule. 
Clearly, we have $\dCb(\g) = 0$, whence the constructed map induces a well-defined derivation of the factor algebra~$\RR$.

\begin{lemma}
Every derivation~$\dCb$ of the algebra~$\RR$ is primitive and locally nilpotent. 
\end{lemma}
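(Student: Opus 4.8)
The plan is to reduce the statement to primitivity. Since the excerpt already records that every primitive derivation is locally nilpotent, it suffices to check that $\dCb$ is homogeneous for the $\deg$-grading and that its degree $d := \deg \dCb$ does not lie in the weight cone $\omega \subseteq K_\Q$.

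First I would verify homogeneity by a direct degree computation on generators. Write $w_{ij} = \deg T_{ij} = Q(e_{ij})$ and recall that $\mu = \sum_j l_{ij} w_{ij}$ for every $i$; consequently each factor $\partial T_k^{l_k}/\partial T_{kc_k}$ is homogeneous of degree $\mu - w_{kc_k}$. In case \ref{dCbconstr_1} the product over $k \ne i$ has two factors, so $\dCb(T_{ic_i})$ has degree $2\mu - \sum_{k \ne i} w_{kc_k}$, whence $\deg \dCb(T_{ic_i}) - w_{ic_i} = 2\mu - (w_{0c_0} + w_{1c_1} + w_{2c_2})$ is independent of $i$; together with $\dCb(T_{ij}) = 0$ for $j \ne c_i$ this shows $\dCb$ is homogeneous of degree $d = 2\mu - \sum_i w_{ic_i}$. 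The analogous computation in case \ref{dCbconstr_2}, where the product over $k \ne i, i_0$ has a single factor, gives $d = \mu - \sum_{i \ne i_0} w_{ic_i}$.

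The heart of the argument is to separate $d$ from $\omega$ by a linear functional. As $\RR$ is generated as a $\K$-algebra by the $T_{ij}$, every nonzero homogeneous element is a $\K$-combination of monomials in the $T_{ij}$, so the weight monoid is generated by the $w_{ij}$ and $\omega$ is the convex cone generated by all $w_{ij}$. A functional on $K_\Q$ is a vector $u = (u_{ij}) \in \Q^n$ orthogonal to $\Im L^*$, i.e. to the two rows $(-l_0, l_1, 0)$ and $(-l_0, 0, l_2)$ of $L$, which means $\sum_j l_{0j} u_{0j} = \sum_j l_{1j} u_{1j} = \sum_j l_{2j} u_{2j}$. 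I would use the explicit choice $u_{ic_i} = 1/l_{ic_i}$ and $u_{ij} = 0$ for $j \ne c_i$: then each of these three sums equals $1$, so $u$ descends to $\phi \in (K_\Q)^*$ with $\phi(\mu) = 1$ and $\phi(w_{ij}) = u_{ij} \ge 0$; in particular $\phi$ is nonnegative on all of $\omega$.

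It then remains to evaluate $\phi$ on $d$ and invoke the numerical hypotheses on the $l_{ic_i}$. In case \ref{dCbconstr_1} we get $\phi(d) = 2 - \sum_i 1/l_{ic_i}$, which is negative because at most one $l_{ic_i}$ exceeds $1$, so at least two summands equal $1$; in case \ref{dCbconstr_2} we get $\phi(d) = 1 - \sum_{i \ne i_0} 1/l_{ic_i}$, negative for the same reason. Since $\phi \ge 0$ on $\omega$ while $\phi(d) < 0$, the degree $d$ cannot lie in $\omega$, so $\dCb$ is primitive and hence locally nilpotent. The step I would treat most carefully is precisely this last one: recognizing that the combinatorial conditions on $l_{i_1 c_{i_1}}$ built into Construction~\ref{dCb_constr} are exactly what force $\phi(d) < 0$, which is where the hypotheses enter and why a single functional settles both cases simultaneously.
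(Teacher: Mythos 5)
Your proof is correct. Note that the paper itself does not prove this lemma at all --- it simply defers to \cite[Construction~4.3]{ArHaHeLi2012} --- so your argument supplies a self-contained verification where the paper has only a citation. The key steps all check out: the degree computation gives $\deg \dCb = 2\mu - \sum_i w_{ic_i}$ in case (i) and $\mu - \sum_{i \ne i_0} w_{ic_i}$ in case (ii); the vector $u$ with $u_{ic_i} = 1/l_{ic_i}$ and all other entries zero is indeed orthogonal to both rows of $L$, hence descends to a functional $\phi$ on $K_\Q$ that is nonnegative on the weight cone $\omega = \operatorname{cone}(w_{ij})$ and satisfies $\phi(\mu)=1$; and the hypotheses of Construction~\ref{dCb_constr} (at most one relevant $l_{ic_i}$ exceeds $1$) are exactly what force $\phi(\deg \dCb) < 0$, since at least two (resp.\ at least one) of the summands $1/l_{ic_i}$ equal $1$. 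Combined with the paper's standing observation that primitive derivations are locally nilpotent, this closes the argument. The only cosmetic caveat is that for $\beta = 0$ the derivation is zero and has no well-defined degree, but the construction implicitly assumes $\beta \ne 0$, so nothing is lost.
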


The proof is given in~\cite[Construction~4.3]{ArHaHeLi2012}.

\smallskip

Let $h \in \RR$ be a homogeneous element in the kernel of a derivation $\dCb$. 
The derivation~$h \dCb$ is also locally nilpotent. 

\begin{definition}
We say that a derivation of a trinomial algebra~$\RR$ is \textit{elementary} if it has the form~$h \dCb$, where $h$ is a homogeneous element in the kernel of~$\dCb$. 
In addition, \textit{elementary derivations of Type~I} are elementary derivations with $\dCb$ corresponding to case~\ref{dCbconstr_1} in Construction~\ref{dCb_constr}; \textit{derivations of Type~II} are elementary derivations with $\dCb$ corresponding to case~\ref{dCbconstr_2}. 
\end{definition}

\begin{example}
\label{ex_dCb_1332}
Suppose $\g = T_{01} T_{02}^3 + T_{11}^3 + T_{21}^2$ (see Example~\ref{ex_gr_1332} and ~\cite[Example 4.7]{ArHaHeLi2012}). 
Let us find all elementary derivations of the algebra~$\RR$. 
Note that there is a unique variable~$T_{01}$ in $\g$ with exponent~$1$. Hence only case~\ref{dCbconstr_2} with $i_0 \ne 0$ and  $C = (1, 1, 1)$ is possible. 
Thus, we have two cases for elementary derivations of Type~II:
\begin{enumerate}
\item $i_0 = 1$, that is $\beta = (\beta_0, 0, -\beta_0)$ for some $\beta_0 \in \K$. Then
$$
\dCb(T_{01}) = 2\beta_0 T_{21}, \quad 
\dCb(T_{02}) = 0, \quad 
\dCb(T_{11}) = 0, \quad 
\dCb(T_{21}) = -\beta_0 T_{02}^3,
$$
i.e., $\dCb = 2\beta_0 T_{21} \dfrac{\partial}{\partial T_{01}} -\beta_0 T_{02}^3 \dfrac{\partial}{\partial T_{21}}$, $h \in \Ker \dCb$; 
\item $i_0 = 2$, that is $\beta = (\beta_0, -\beta_0, 0)$ for some $\beta_0 \in \K$. Then
$$
\dCb(T_{01}) = 3\beta_0 T_{11}^2, \quad 
\dCb(T_{02}) = 0, \quad 
\dCb(T_{11}) = -\beta_0 T_{02}^3, \quad 
\dCb(T_{21}) = 0,
$$
i.e., $\dCb = 3\beta_0 T_{11}^2 \dfrac{\partial}{\partial T_{01}} -\beta_0 T_{02}^3 \dfrac{\partial}{\partial T_{11}}$, $h \in \Ker \dCb$.
\end{enumerate}
\end{example}

\begin{example}
\label{ex_dCb_11112}
Let $\g = T_{01}T_{02} + T_{11}T_{12} + T_{21}^2$ (see Example~\ref{ex_gr_11112}). 
There is a unique variable $T_{ij}$ in the trinomial with exponent $l_{ij}$ such that $l_{ij} > 1$. 
Hence the algebra~$\RR$ admits elementary derivations of both types: 
we can take any sequence $C = (c_0, c_1, 1)$, $c_0, c_1 \in \{1, 2\}$, 
and any vector $\beta = (\beta_0, \beta_1, \beta_2)$ under the condition $\beta_0 + \beta_1 + \beta_2 = 0$. 
Let us give an example of elementary derivation of Type~I:
$$\dCb = T_{11}T_{21} \dfrac{\partial}{\partial T_{02}} + T_{01}T_{21} \dfrac{\partial}{\partial T_{12}} - 
T_{01}T_{11} \dfrac{\partial}{\partial T_{21}}.$$
\end{example}

\begin{example}
\label{ex_dCb_222}
Let $\g = T_{01}^2 + T_{11}^2 + T_{21}^2$ (see Example~\ref{ex_gr_222}); 
all exponents in this trinomial are greater than~$1$. Hence there exists no elementary derivation. 
\end{example}

\section{Auxiliary lemmas}
We use notation introduced in the previous section. 
Let us recall some definitions. Let~$K$ be an abelian group and $R$ a $K$-graded algebra. By a \textit{$K$-prime} element of~$R$ we mean a homogeneous nonzero nonunit $f \in R$ such that $f \divid gh$ with homogeneous $g, h \in R$ implies $f \divid g$ or $f \divid h$. 
We say that $R$ is \textit{factorially $K$-graded} if every nonzero homogeneous nonunit of~$R$ is a product of $K$-primes. 
It is clear that this decomposition is unique up to permutation of factors and multiplication by units. 

\begin{lemma} \label{div_lem}
With the notation of Construction~\ref{gr_constr}, the following statements hold. 
\begin{enumerate}
\item \label{TijKprime}
The generators~$T_{ij}$ are pairwise nonassociated $K$-prime elements of~$\RR$.
\item \label{fact_graded}
The algebra~$\RR$ is factorially $K$-graded. 
\end{enumerate}
\end{lemma}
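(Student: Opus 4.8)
The plan is to prove the two assertions together: I would establish (a) directly and then derive (b) from it by localizing and invoking a graded form of Nagata's criterion.

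To control units I would first pass to a positive coarsening of the grading. As every $l_{ij}>0$, there is a linear form $\phi\colon\Z^n\to\Z$ vanishing on the generators $v_1=(-l_0,l_1,0)$, $v_2=(-l_0,0,l_2)$ of $\Im L^*$ and strictly positive on each $e_{ij}$; finding $\phi$ just amounts to equalizing the three positive integers $\phi(T_0^{l_0})$, $\phi(T_1^{l_1})$, $\phi(T_2^{l_2})$, which is always possible. This $\phi$ factors through $Q$ and makes $\RR$ positively graded with $\RR_0=\K$, so $\RR^*=\K^*$. Since the only units are nonzero constants, two distinct generators can be associated only if $T_{ij}-cT_{i'j'}\in(\g)$ for some $c\in\K^*$; but a nonzero element of $(\g)$ has total degree at least two or, when $\g$ is linear, three monomials, so it never equals the two-term polynomial $T_{ij}-cT_{i'j'}$, and the generators are pairwise non-associated.

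The heart of (a) is the $K$-primality of each $T_{ij}$, which I would read as $K$-integrality of $\RR/(T_{ij})$. By the symmetry of the three blocks take $i=0$; killing $T_{0j}$ annihilates the monomial $T_0^{l_0}$, so $\RR/(T_{0j})$ is a polynomial ring (in the remaining variables $T_{0j'}$) over $A:=\K[T_{1j},T_{2j}]/(T_1^{l_1}+T_2^{l_2})$, and as adjoining homogeneous variables preserves $K$-integrality (a leading-coefficient argument) it suffices to treat $A$. Let $d$ be the greatest common divisor of all exponents $l_{1j},l_{2j}$ and $m_{ij}=l_{ij}/d$; then in the polynomial ring $P=\K[T_{1j},T_{2j}]$ one has the factorization into pairwise non-associated irreducibles $T_1^{l_1}+T_2^{l_2}=\prod_k\bigl(T_1^{m_1}-\zeta_kT_2^{m_2}\bigr)$, with $\zeta_k$ ranging over the $d$-th roots of $-1$. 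The decisive observation is that $w:=Q(0,m_1,-m_2)=\deg T_1^{m_1}-\deg T_2^{m_2}$ has order \emph{exactly} $d$ in $K$: one has $dw=Q(v_1-v_2)=0$, while a relation $e\,(0,m_1,-m_2)=av_1+bv_2$ in $\Z^n$ forces $a+b=0$ from the block-$0$ coordinates and then $e=ad$ from the block-$1$ coordinates, so $d\mid e$. Thus none of the factors is $\deg$-homogeneous, yet the quasitorus $H$ with character group $K$, acting on $P$ through the grading, permutes them transitively, because $h\mapsto w(h)$ carries $H$ onto the full group of $d$-th roots of unity. Consequently any $\deg$-homogeneous element of $P$ divisible by one factor is divisible by all of them, hence by their product; applied to $fg\in(T_1^{l_1}+T_2^{l_2})$ with $f,g$ homogeneous this puts $f$ or $g$ into the ideal, proving $A$ to be $K$-integral.

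Finally I would obtain (b) from (a). Localizing $\RR$ at $T=\prod_{ij}T_{ij}$ makes every generator invertible and, after dividing the relation by the unit $T_0^{l_0}$, yields $\RR_T=\K[T_{ij}^{\pm1}]/(1+s_1+s_2)$ with $s_k=T^{v_k}$. Its degree-zero part is $\K[s_1,s_1^{-1},(1+s_1)^{-1}]$, a localization of a one-variable polynomial ring and so a principal ideal domain $D$, over which every graded component of $\RR_T$ is free of rank one; since the homogeneous elements of $\RR_T$ are, up to units, elements of the factorial ring $D$, it is factorially $K$-graded. As $\RR$ is Noetherian and the $T_{ij}$ are $K$-prime by (a), the graded analogue of Nagata's criterion---factoriality of the localization at a multiplicative set generated by $K$-primes descends to the ring---shows $\RR$ itself to be factorially $K$-graded. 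The only genuinely delicate point is the order-exactly-$d$ computation, which is precisely what prevents the non-homogeneous factorization of $T_1^{l_1}+T_2^{l_2}$ from surviving in the $\deg$-graded world; everything else is routine.
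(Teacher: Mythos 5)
Your proof is essentially correct, but note that the paper does not actually prove this lemma: it simply cites \cite[Proposition 2.2(i) and Theorem 1.1(i)]{HaHe2013}, so what you have written is in effect a reconstruction of the Hausen--Herppich argument rather than an alternative to anything in the text. Your route is the natural one and all the load-bearing steps check out: the positive coarsening forces $\RR^*=\K^*$ and hence non-associatedness; the reduction of $K$-primality of $T_{0j}$ to $K$-integrality of $\K[T_{1*},T_{2*}]/(T_1^{l_1}+T_2^{l_2})$ is valid (the leading-coefficient argument for adjoining homogeneous variables works); the computation that $w=Q(0,m_1,-m_2)$ has order exactly $d$ is correct and is indeed the crux, since it gives surjectivity of $h\mapsto w(h)$ onto $\mu_d$ and hence transitivity of $H$ on the $d$ binomial factors; and the descent of factorial gradedness from $\RR_T$, whose homogeneous elements are units times elements of the PID $\K[s_1,s_1^{-1},(1+s_1)^{-1}]$, is the standard mechanism. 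Two ingredients are asserted rather than proved and would need a reference or a short argument to make this self-contained: first, the irreducibility of each factor $T_1^{m_1}-\zeta_kT_2^{m_2}$ (true because the exponent vector $(m_1,-m_2)$ is primitive; one checks primality after inverting the variables, where the binomial becomes $T^{(m_1,-m_2)}-\zeta_k$ and a primitive vector extends to a lattice basis, and then saturates back since no variable divides the binomial); second, the graded analogue of Nagata's criterion, which is a genuine theorem requiring ACCP and the fact that a homogeneous element of $\RR$ not divisible by any $T_{ij}$ generating a $K$-prime of $\RR_T$ generates a $K$-prime of $\RR$. A cosmetic slip: when $d=1$ the single factor \emph{is} $\deg$-homogeneous, contrary to your parenthetical claim, but the argument is vacuously fine in that case. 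With those two citations supplied, the proof stands.
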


This was proved in~\cite[Proposition 2.2(i) and Theorem 1.1(i)]{HaHe2013}.

\smallskip

The following two lemmas include basic properties of locally nilpotent derivations.  
\begin{lemma} 
\label{fr_lem}
Suppose $\delta\colon R \to R$ is a locally nilpotent derivation of a domain $R$ and $f, g \in R$. 
\begin{enumerate}
\item \label{fr_lem_f} 
If $f \divid \delta(f)$, then $\delta(f) = 0$.
\item \label{fr_lem_ker} 
The derivation $f \delta$ is locally nilpotent if and only if $f \in \Ker \delta$ holds. 
\item \label{fr_lem_fg}
If $f \divid \delta(g)$ and $g \divid \delta(f)$, then $\delta(f) = 0$ or $\delta(g) = 0$. 
\item \label{fr_lem_14}
If $\delta = \sum \limits_{m \le i \le n} \delta_i$, 
where all derivations $\delta_i$ are homogeneous with respect to some \hbox{$\Z$-grading} on~$R$,
$\delta_m, \delta_n \ne 0$, then $\delta_m$ and $\delta_n$ are locally nilpotent. 
\end{enumerate}
\end{lemma}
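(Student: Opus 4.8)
The plan is to reduce parts (a)--(c) to a single tool: the \emph{degree function} of a locally nilpotent derivation. For $f \ne 0$ set $\deg_\delta(f) = \max\{m \ge 0 \mid \delta^m(f) \ne 0\}$, and $\deg_\delta(0) = -\infty$. The one nontrivial input I would establish first is multiplicativity, $\deg_\delta(fg) = \deg_\delta(f) + \deg_\delta(g)$, which uses that $R$ is a domain of characteristic zero. To see it, put $p = \deg_\delta(f)$, $q = \deg_\delta(g)$ and expand $\delta^{p+q}(fg)$ by the Leibniz rule: a summand $\binom{p+q}{k}\delta^k(f)\,\delta^{p+q-k}(g)$ survives only when $k \le p$ and $p+q-k \le q$, i.e. only for $k = p$, leaving $\binom{p+q}{p}\delta^p(f)\,\delta^q(g)$, which is nonzero because there are no zero divisors and the binomial coefficient is invertible. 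Two consequences I will use repeatedly: $\deg_\delta(f) \ge 0$ for $f \ne 0$, and $\deg_\delta(\delta(f)) = \deg_\delta(f) - 1$ whenever $\delta(f) \ne 0$. Proving this multiplicativity is the main (and only) obstacle; the rest is bookkeeping.

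With the degree function in hand, (a), (c) are immediate. For (a), assume $f \ne 0$ and write $\delta(f) = af$; if $\delta(f) \ne 0$ then $a \ne 0$, so multiplicativity gives $\deg_\delta(f) - 1 = \deg_\delta(a) + \deg_\delta(f) \ge \deg_\delta(f)$, a contradiction, whence $\delta(f) = 0$. For (c), argue by contradiction: if $\delta(f) \ne 0$ and $\delta(g) \ne 0$, then $f \divid \delta(g)$ forces $\deg_\delta(g) > \deg_\delta(f)$ by the same degree count, while $g \divid \delta(f)$ forces the reverse inequality.

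Part (b) I would split. The backward implication is a direct induction: if $\delta(f) = 0$, then $(f\delta)^m(g) = f^m \delta^m(g)$ for all $m$ (in the inductive step the cross term carries $\delta(f^m) = m f^{m-1}\delta(f) = 0$), so local nilpotency of $\delta$ transfers to $f\delta$. The forward implication I would deduce from (a) applied to the locally nilpotent derivation $f\delta$ itself: since $f \divid f\delta(f) = (f\delta)(f)$, part (a) yields $(f\delta)(f) = 0$, i.e. $f\delta(f) = 0$, and as $R$ is a domain this gives $\delta(f) = 0$.

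Finally, for (d) I would use a leading/trailing term argument for the fixed $\Z$-grading, where $\delta_i$ shifts degree by $i$. Take a homogeneous $f$ of degree $d$. Expanding $\delta^N = (\sum_i \delta_i)^N$ and collecting homogeneous pieces, the top-degree part of $\delta^N(f)$, of degree $d + nN$, can only come from applying $\delta_n$ each time and so equals $\delta_n^N(f)$; symmetrically the bottom-degree part equals $\delta_m^N(f)$. Choosing $N$ with $\delta^N(f) = 0$ kills every homogeneous component, hence $\delta_n^N(f) = \delta_m^N(f) = 0$. Since an arbitrary element is a finite sum of homogeneous ones, taking the maximum of the resulting exponents shows $\delta_m$ and $\delta_n$ are locally nilpotent. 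The only point requiring care here is confirming that no mixed product $\delta_{i_1}\cdots\delta_{i_N}$ contributes to these extreme degrees, which holds precisely because the components are strictly ordered by their degree shift.
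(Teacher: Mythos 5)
Your proof is correct. The paper does not prove this lemma itself --- it only cites \cite[Principles 5, 7, 14 and Corollary 1.20]{Fr2006} --- and your argument via the multiplicative degree function $\deg_\delta$ (for parts (a)--(c)) and the extraction of extremal homogeneous components of $\delta^N(f)$ (for part (d)) is essentially the standard one found in that reference; all the steps, including the reduction of the forward direction of (b) to (a) applied to $f\delta$, check out.
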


The proof can be found, for example, in~\cite[Principles 5, 7, 14 and Corollary 1.20]{Fr2006}. 

\begin{lemma}
\label{fact_hom_der_lem}
Let $R$ be a finitely generated $\Z^k$-graded domain. If there exists a nonzero locally nilpotent derivation of~$R$, then $R$ admits a nonzero homogeneous locally nilpotent derivation. 
\end{lemma}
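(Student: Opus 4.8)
The plan is to reduce the multigraded situation to the one-parameter statement already available as Lemma~\ref{fr_lem}\ref{fr_lem_14}. First I would decompose the given nonzero locally nilpotent derivation~$\delta$ into its $\Z^k$-homogeneous components. For each $v \in \Z^k$ let $\delta_v$ be the map sending a homogeneous element $f \in R_w$ to the $R_{w + v}$-component of $\delta(f)$ and extending linearly; one checks directly from the Leibniz rule that each $\delta_v$ is again a derivation and that $\delta = \sum_v \delta_v$. Since $R$ is finitely generated, $\delta$ is determined by its values on finitely many homogeneous generators, and each value has only finitely many homogeneous components, so $\delta_v \ne 0$ for only finitely many~$v$. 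I would then set $F = \{v \in \Z^k \mid \delta_v \ne 0\}$, a nonempty finite subset of $\Z^k$ because $\delta \ne 0$.

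Next I would collapse the $\Z^k$-grading to a single $\Z$-grading in a way that keeps the finitely many degrees in $F$ distinct. Choose a group homomorphism $\lambda \colon \Z^k \to \Z$ that is injective on~$F$. Such a $\lambda$ exists: for a fixed pair $v \ne v'$ in $F$, the homomorphisms with $\lambda(v) = \lambda(v')$ form the kernel of the nonzero map $\mathrm{Hom}(\Z^k, \Z) \to \Z$, $\lambda \mapsto \lambda(v - v')$, hence a proper subgroup, and the free abelian group $\mathrm{Hom}(\Z^k, \Z) \cong \Z^k$ is not a finite union of proper subgroups. Setting $R^{(m)} = \bigoplus_{\lambda(w) = m} R_w$ defines a $\Z$-grading of~$R$, with respect to which each $\delta_v$ is homogeneous of degree $\lambda(v)$. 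Because $\lambda$ is injective on~$F$, the integers $\lambda(v)$ for $v \in F$ are pairwise distinct, so $\delta = \sum_{v \in F} \delta_v$ is exactly the decomposition of $\delta$ into its $\Z$-homogeneous components.

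Finally I would invoke Lemma~\ref{fr_lem}\ref{fr_lem_14} for this $\Z$-grading: the components of $\delta$ of smallest and largest $\lambda$-degree are locally nilpotent. Choosing $v_0 \in F$ with $\lambda(v_0)$ minimal, the derivation $\delta_{v_0}$ is nonzero and locally nilpotent, and by construction it is homogeneous of degree~$v_0$ with respect to the original $\Z^k$-grading, which is precisely what is required.

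The one step deserving care is this passage from the $\Z^k$-grading to a single $\Z$-grading, since the extremal-component principle of Lemma~\ref{fr_lem}\ref{fr_lem_14} is stated only for $\Z$-gradings. The separating functional $\lambda$ is what guarantees that no two distinct components $\delta_v$ get merged into one $\Z$-degree under the collapse, so that the $\Z$-homogeneous extremal components coincide with genuine $\Z^k$-homogeneous ones; everything else is a routine verification that the maps $\delta_v$ are derivations and that only finitely many of them are nonzero.
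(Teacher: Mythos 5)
Your proof is correct, but it takes a genuinely different route from the paper's. The paper does not collapse the $\Z^k$-grading via a single functional; instead it iterates Lemma~\ref{fr_lem}\ref{fr_lem_14} coordinate by coordinate: it takes the extremal (highest-degree) component of $\delta$ with respect to the $\Z$-grading by the first coordinate of $\Z^k$, obtaining a nonzero locally nilpotent derivation homogeneous in that coordinate, then repeats for the remaining $k-1$ coordinates, each step preserving the homogeneity already achieved. Your version decomposes $\delta$ into all of its $\Z^k$-homogeneous components at once, chooses a homomorphism $\lambda\colon \Z^k \to \Z$ injective on the finite support $F$, and applies the extremal-component principle a single time, so that the $\Z$-extremal component is automatically a genuine $\Z^k$-homogeneous component. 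The trade-off: the paper's iteration needs no separating functional and only the (implicit, easy) observation that extracting the top component in coordinate $j$ does not destroy homogeneity in coordinates $1,\dots,j-1$; your argument needs the existence of $\lambda$, which rests on the fact that $\Z^k$ is not covered by the finitely many rank-$(k-1)$ kernels $\{\lambda \mid \lambda(v-v')=0\}$ --- true, and in fact one can just take $\lambda=(1,N,\dots,N^{k-1})$ for $N$ large, but it is an extra ingredient. In exchange, your approach applies in one stroke and is basis-free, hence adapts directly to gradings by any finitely generated free abelian group. Both proofs are complete and correct.
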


\begin{proof}
Denote given locally nilpotent derivation on~$R$ by~$\delta^{(0)}$.
Consider the $\Z$-grading on the algebra~$R$ induced by the first component in~$\Z^k$.  
Since $R$ is finitely generated, we have $\delta^{(0)} = \sum \limits_{m \le i \le n} \delta^{(0)}_i$ for some homogeneous derivations~$\delta^{(0)}_i$. It follows from Lemma~\ref{fr_lem}\ref{fr_lem_14} that $\delta^{(1)} = \delta^{(0)}_n$ is a nonzero locally nilpotent derivation that is homogeneous with respect to the $\Z$-grading by the first component of~$\Z^k$. 
Applying Lemma~\ref{fr_lem}\ref{fr_lem_14} $k - 1$ times to the $\Z$-gradings by other components of~$\Z^k$, we obtain a derivation~$\delta^{(k)}$ that is homogeneous with respect to the grading by all components of~$\Z^k$, i.e., with respect to the $\Z^k$-grading. 
\end{proof}

The following lemma is proved in~\cite[Lemma~3.4]{Ga2017}. 
For convenience of the reader we give a short proof below. 

\begin{lemma}
\label{3_lemma}
Let $\delta$ be a $\deg$-homogeneous locally nilpotent derivation of an algebra~$\RR$. 
Then there exists at most one variable~$T_{ij}$ in every monomial~$T_i^{l_i}$ such that $\delta(T_{ij}) \ne 0$. 
\end{lemma}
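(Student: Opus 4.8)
I would argue by contradiction. Suppose that in some monomial $T_i^{l_i}$ two distinct variables are moved, say $\delta(T_{ij_1}) \ne 0$ and $\delta(T_{ij_2}) \ne 0$ with $j_1 \ne j_2$. The starting point is the identity $\delta(\g) = 0$ in $\RR$, which by the Leibniz rule becomes
\[
\sum_{i=0}^{2} \delta\bigl(T_i^{l_i}\bigr) = 0,
\qquad
\delta\bigl(T_i^{l_i}\bigr) = \Bigl(\,\textstyle\prod_{k} T_{ik}^{l_{ik}-1}\Bigr) \sum_{j} l_{ij}\Bigl(\,\textstyle\prod_{k \ne j} T_{ik}\Bigr)\delta(T_{ij}).
\]
All three summands are homogeneous of one and the same degree $\mu + \deg\delta$, and the first factor of $\delta(T_i^{l_i})$ is a monomial in the variables of block $i$ only. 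Thus the problem is local to a single block, and it suffices to extract a contradiction from one block carrying two moved variables.

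Next I would record two consequences of the hypotheses. First, by Lemma~\ref{div_lem} the $T_{ij}$ are pairwise non-associated $K$-prime elements and $\RR$ is factorially $K$-graded; hence every homogeneous element has a well-defined $T_{ij}$-order $v_{ij}$. By Lemma~\ref{fr_lem}\ref{fr_lem_f}, a moved variable satisfies $T_{ij} \nmid \delta(T_{ij})$, that is $v_{ij}(\delta(T_{ij})) = 0$. Plugging this into the formula above and comparing $T_{ij}$-orders term by term yields the valuation identity
\[
v_{ij}\bigl(\delta(T_i^{l_i})\bigr) = l_{ij} - 1 \quad\text{whenever } \delta(T_{ij}) \ne 0,
\]
since then the $j$-th summand is the unique one of minimal $T_{ij}$-order; otherwise the order is at least $l_{ij}$. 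Second, I would use that the kernel of a locally nilpotent derivation is factorially closed: if a nonzero product lies in $\Ker\delta$ then so do all its factors. In particular, once $T_{ij} \notin \Ker\delta$, no nonzero element of $\Ker\delta$ is divisible by $T_{ij}$.

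When $\deg\delta \notin \omega$ (so that $\delta$ is primitive) a degree comparison already suffices: since $\omega$ is pointed one may choose a homomorphism $\chi\colon K \to \Z$ positive on every weight $\deg T_{ij}$ yet negative on $\deg\delta$, which turns $\RR$ into a positively graded algebra on which $\delta$ strictly lowers degree, and the valuation identities together with $v_{ij}(\delta(T_{ij})) = 0$ then rule out two moved variables in one block. The genuine difficulty is the opposite case $\deg\delta \in \omega$, which really occurs for non-primitive locally nilpotent derivations such as the elementary ones $h\dCb$. Here the valuation bookkeeping breaks down, because the images $\delta(T_{kj})$ of variables from the \emph{other} blocks $k \ne i$ may themselves be divisible by the block-$i$ primes $T_{ij_1}, T_{ij_2}$; thus the orders $v_{ij}$ of the two ``foreign'' summands $\delta(T_k^{l_k})$ in the fundamental relation are no longer controlled, and the relation fails to isolate block $i$.

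To overcome this I would use local nilpotency itself, in its iterative form, rather than through valuations. Iterating $\delta$ on $T_{ij_1}$ produces a nonzero homogeneous $P = \delta^{m}(T_{ij_1}) \in \Ker\delta$, and likewise $P' = \delta^{m'}(T_{ij_2}) \in \Ker\delta$; factorial closedness of $\Ker\delta$ then guarantees that none of $T_{ij_1}, T_{ij_2}$ divides $P$ or $P'$. Since $\Ker\delta$ has transcendence degree $n-2$ while $\RR$ has transcendence degree $n-1$, the two moved variables $T_{ij_1}, T_{ij_2}$ — both transcendental over $\operatorname{Frac}(\Ker\delta)$ — must satisfy a single algebraic relation over $\operatorname{Frac}(\Ker\delta)$. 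My plan is to combine this relation, the homogeneity of $P$ and $P'$, and the fundamental identity $\sum_i \delta(T_i^{l_i}) = 0$ to force one of $\delta(T_{ij_1}), \delta(T_{ij_2})$ to vanish, contradicting the assumption. I expect this final step — converting the factorially closed kernel and the defining relation into an outright contradiction in the case $\deg\delta \in \omega$ — to be the main obstacle of the proof.
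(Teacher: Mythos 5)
There is a genuine gap: your argument is not a proof, because you explicitly leave open the case $\deg\delta \in \omega$, and that is exactly the case the lemma is really needed for --- non-primitive homogeneous locally nilpotent derivations do occur (e.g.\ $T_{11}^k\dCb$ for $k\ge 2$ in Example~\ref{ex_th_1332}). The ingredients you propose for closing it (factorial closedness of $\Ker\delta$, the transcendence degree count, an unspecified algebraic relation between $T_{ij_1}$ and $T_{ij_2}$ over the fraction field of $\Ker\delta$) are never assembled into a contradiction, and it is unclear how they could be: none of that data distinguishes two moved variables in the \emph{same} block from two moved variables in \emph{different} blocks, and the latter situation is perfectly possible. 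Even in the primitive case that you do treat, strict separation only yields a functional $\chi$ that is nonnegative (not strictly positive) on the weights, and the concluding step ``the valuation identities then rule out two moved variables in one block'' is asserted rather than carried out.

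The paper's proof bypasses $\delta(\g)=0$ and all valuation bookkeeping. Assuming $\delta(T_{01})\ne 0$ and $\delta(T_{02})\ne 0$, it introduces the auxiliary $\Z$-grading $\rdeg T_{01}=l_{02}$, $\rdeg T_{02}=-l_{01}$, $\rdeg T_{ij}=0$ for all other variables; then $\g$ is $\rdeg$-homogeneous of degree $0$, so the grading descends to $\RR$, and by Lemma~\ref{deg_lem} the $\deg$-homogeneous derivation $\delta$ is automatically $\rdeg$-homogeneous. If $\rdeg\delta\ge 0$, then $\rdeg\delta(T_{01})>0$, and since $T_{01}$ is the unique variable of positive $\rdeg$, every monomial of $\delta(T_{01})$ contains $T_{01}$, i.e.\ $T_{01}\divid\delta(T_{01})$, contradicting Lemma~\ref{fr_lem}\ref{fr_lem_f}; if $\rdeg\delta\le 0$, the symmetric argument applies to $T_{02}$. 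This rank-one auxiliary grading, positive only on one of the two offending variables and negative only on the other, is the idea your attempt is missing; it handles primitive and non-primitive derivations uniformly and in a few lines.
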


\begin{proof}
Assume the converse. Then there is a monomial with at least two variables not belonging to the kernel of the derivation. 
We can assume that these variables are $T_{01}$ and $T_{02}$ in $T_0^{l_0}$.

Consider the following grading on the algebra~$\K[T_{ij}]$:
\begin{center}
\renewcommand{\arraystretch}{1.4}
\begin{tabular}{c ||   c|c|c| c |c    |    c| c |c     |    c| c |c}
$T_{ij}$ & 
$T_{01}$ & $T_{02}$ & $T_{03}$ & $\ldots$ & $T_{0n_0}$ & 
$T_{11}$ & $\ldots$ & $T_{1n_1}$ & 
$T_{21}$ & $\ldots$ & $T_{2n_2}$ \\
\hline
$\rdeg T_{ij}$ & 
$l_{02}$ & $-l_{01}$ & $0$ & $\ldots$ & $0$ &
$0$ & $\ldots$ & $0$ &
$0$ & $\ldots$ & $0$
\end{tabular}
\end{center}
The trinomial $\g$ is homogeneous (of degree $0$) with respect to this grading, therefore the \hbox{$\rdeg$-grading} is a well-defined grading on the factor algebra~$\RR$. 

By Lemma~\ref{deg_lem}, it follows that the derivation~$\delta$ is $\rdeg$-homogeneous. Then we have the following two cases.

1) $\rdeg \delta \ge 0$. 
Then $\rdeg \delta(T_{01}) = \rdeg \delta + \rdeg T_{01} > 0$. 
Note that $T_{01}$ is a unique variable with a positive degree. 
Hence every monomial in $\delta(T_{01})$ includes $T_{01}$ and therefore $T_{01}$ divides $\delta(T_{01})$. But $\delta(T_{01}) \ne 0$ by assumption. This contradicts Lemma~\ref{fr_lem}\ref{fr_lem_f}. 

2) $\rdeg \delta \le 0$. 
Then $\rdeg \delta(T_{02}) = \rdeg \delta + \rdeg T_{02} < 0$. 
In the same way, $T_{02}$ divides $\delta(T_{02})$ and 
this contradicts Lemma~\ref{fr_lem}\ref{fr_lem_f}.
\end{proof}

\section{Main results}

The following proposition and its proof are variations on~\cite[Theorem~4.4]{ArHaHeLi2012}. 
The distinctions are the following. We consider only the case of trinomial hypersurfaces, and in~\cite[Theorem~4.4]{ArHaHeLi2012} varieties defined by a system of trinomials are studied. 
At the same time, we need only the assumption that images of monomials are proportional, and do not use the primitivity of derivations. 
The latter condition is stronger, since according to~\cite[Proposition 3.5]{ArHaHeLi2012}, the dimension of the homogeneous component of degree~$w$ is equal to~$1$ whenever $w - \deg \g$ does not lie in the weight cone. 

\begin{proposition}
\label{prop_th}
Let $\delta \colon \RR \to \RR$ be a $\deg$-homogeneous locally nilpotent derivation. Suppose that $\delta(T_0^{l_0})$, $\delta(T_1^{l_1})$, and $\delta(T_2^{l_2})$ lie in a subspace of~$\RR$ of dimension~$1$. Then the derivation~$\delta$ is elementary. 
\end{proposition}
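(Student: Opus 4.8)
The plan is to use the single relation coming from $\g = 0$ to pin down the shape of $\delta$ on the monomials $T_i^{l_i}$, to read off the data $(C,\beta)$ of a candidate derivation $\dCb$, to verify that this data is admissible in the sense of Construction~\ref{dCb_constr}, and finally to divide out a homogeneous factor to recover $h$. Since $\g = 0$ in $\RR$, applying $\delta$ gives $\delta(T_0^{l_0}) + \delta(T_1^{l_1}) + \delta(T_2^{l_2}) = 0$. Each $\delta(T_i^{l_i})$ is homogeneous of degree $\mu + \deg\delta$, so the hypothesis lets me write $\delta(T_i^{l_i}) = \beta_i v$ for one homogeneous $v \in \RR$ and scalars $\beta_i \in \K$; the relation then forces $\beta_0 + \beta_1 + \beta_2 = 0$. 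If $v = 0$ then all $\delta(T_i^{l_i}) = 0$, which forces $\delta = 0$ since $\RR$ is a domain, and the claim is trivial; so I assume $v \ne 0$. Then the $\beta_i$ do not all vanish, and as they sum to zero we fall into exactly one of the two cases of Construction~\ref{dCb_constr}: either all $\beta_i \ne 0$, or there is a unique $i_0$ with $\beta_{i_0} = 0$.

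By Lemma~\ref{3_lemma}, each block $i$ with $\delta(T_i^{l_i}) \ne 0$ contains a unique index $c_i$ with $\delta(T_{ic_i}) \ne 0$; writing $M_i = T_i^{l_i}/T_{ic_i}$, the Leibniz rule gives $\delta(T_i^{l_i}) = l_{ic_i} M_i\,\delta(T_{ic_i})$. For a block with $\beta_i = 0$ this same formula, together with the fact that $\RR$ is a domain, shows that every variable of that block lies in $\Ker\delta$. I take $C = (c_0, c_1, c_2)$, choosing $c_{i_0}$ arbitrarily in the second case, where it does not enter the formulas defining $\dCb$.

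The main obstacle is to check that $C$ meets the requirement of Construction~\ref{dCb_constr}, namely that at most one of the exponents $l_{ic_i}$ exceeds $1$ (among all three blocks in case~\ref{dCbconstr_1}, among the two blocks $i \ne i_0$ in case~\ref{dCbconstr_2}). Let $i \ne j$ be two blocks on which $\delta$ is nonzero. Proportionality gives $\beta_j\,\delta(T_i^{l_i}) = \beta_i\,\delta(T_j^{l_j})$, that is,
\[
\beta_j\, l_{ic_i}\, M_i\,\delta(T_{ic_i}) = \beta_i\, l_{jc_j}\, M_j\,\delta(T_{jc_j}).
\]
Comparing the multiplicity of the $K$-prime $T_{jc_j}$ (Lemma~\ref{div_lem}) on both sides of this homogeneous identity, and using that $T_{jc_j} \nmid M_i$ while $T_{jc_j}^{\,l_{jc_j}-1} \divid M_j$, I obtain $T_{jc_j}^{\,l_{jc_j}-1} \divid \delta(T_{ic_i})$; symmetrically $T_{ic_i}^{\,l_{ic_i}-1} \divid \delta(T_{jc_j})$. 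If both $l_{ic_i} > 1$ and $l_{jc_j} > 1$, then $T_{ic_i} \divid \delta(T_{jc_j})$ and $T_{jc_j} \divid \delta(T_{ic_i})$, whence Lemma~\ref{fr_lem}\ref{fr_lem_fg} forces $\delta(T_{ic_i}) = 0$ or $\delta(T_{jc_j}) = 0$, contradicting the choice of $c_i, c_j$. Hence at most one exponent exceeds $1$, so $\dCb$ is a well-defined derivation for the data $(C, \beta)$, and is locally nilpotent.

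It remains to construct $h$. For each block $i$ with $\beta_i \ne 0$, the equality $\beta_i v = l_{ic_i} M_i\,\delta(T_{ic_i})$ shows $M_i \divid v$. Monomials $M_i$ from distinct blocks are built from disjoint, pairwise non-associated $K$-primes (Lemma~\ref{div_lem}), hence are coprime, so their product divides $v$ in the factorially $K$-graded domain $\RR$. This product is an associate of $\prod_{k} \partial T_k^{l_k}/\partial T_{kc_k}$ in case~\ref{dCbconstr_1} and of $\prod_{k \ne i_0} \partial T_k^{l_k}/\partial T_{kc_k}$ in case~\ref{dCbconstr_2}; dividing $v$ by it produces a homogeneous element $h \in \RR$, homogeneous because $\RR$ is a graded domain. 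A comparison on generators then finishes the matching: both $\delta$ and $h\dCb$ vanish on every $T_{ij}$ with $j \ne c_i$, and on every variable of the block $i_0$ in case~\ref{dCbconstr_2}, while for $j = c_i$ the required equality $\delta(T_{ic_i}) = h\,\dCb(T_{ic_i})$ follows by cancelling the nonzero factor $l_{ic_i} M_i$ from the two expressions $\beta_i v = l_{ic_i} M_i\,\delta(T_{ic_i})$ and $\beta_i v = l_{ic_i} M_i\,h\,\dCb(T_{ic_i})$ for $\beta_i v$, the latter being a direct computation from $v = h\prod_k \partial T_k^{l_k}/\partial T_{kc_k}$ and the definition of $\dCb$. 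Thus $\delta = h\dCb$. Finally, $\delta = h\dCb$ is locally nilpotent by hypothesis and $\dCb$ is locally nilpotent, so Lemma~\ref{fr_lem}\ref{fr_lem_ker} gives $h \in \Ker\dCb$; therefore $\delta$ is elementary.
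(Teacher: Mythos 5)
Your proof is correct and follows essentially the same route as the paper: Lemma~\ref{3_lemma} to isolate one variable per block, a divisibility comparison of $K$-primes combined with Lemma~\ref{fr_lem}\ref{fr_lem_fg} to show at most one relevant exponent exceeds~$1$, extraction of $h$ by dividing the common factor by the product of partial derivatives, and Lemma~\ref{fr_lem}\ref{fr_lem_ker} to place $h$ in $\Ker\dCb$. The only (minor, and slightly cleaner) deviation is that you obtain $\beta_0+\beta_1+\beta_2=0$ directly from $(\sum_i\beta_i)v=\delta(\g)=0$ in the domain $\RR$, whereas the paper lifts $\delta(\g)$ to $\K[T_{ij}]$ and argues via coprimality of $\g$ with the monomial factor.
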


\begin{proof}
According to Lemma~\ref{3_lemma}, there is at most one variable $T_{ij}$ in each monomial $T_i^{l_i}$ such that $\delta(T_{ij}) \ne 0$. Let $\frK = \{i \mid \exists c_i \colon \delta(T_{ic_i}) \ne 0\}$.

Let us prove that if $\delta(T_{ic_i}) \ne 0$ and $\delta(T_{kc_k}) \ne 0$, then $l_{ic_i} = 1$ or $l_{kc_k} = 1$. 
Assume the converse, so $l_{ic_i} \ne 1$ and $l_{kc_k} \ne 1$. 
By the above, we obtain
\begin{equation}
\label{prop_th_1}
\delta(T_i^{l_i}) = \delta(T_{ic_i}) \, \frac{\partial T_i^{l_i}}{\partial T_{ic_i}}. 
\end{equation}
Since $l_{ic_i} \ne 1$, we have $T_{ic_i} \divid \partial T_i^{l_i} / \partial T_{ic_i}$, therefore $T_{ic_i} \divid \delta(T_{i}^{l_i})$. 
Similarly, $T_{kc_k} \divid \delta(T_k^{l_k})$.
By assumption, $\delta(T_i^{l_i})$ and $\delta(T_k^{l_k})$ are proportional, 
hence $T_{kc_k} \divid \delta(T_{i}^{l_i})$ and ${T_{ic_i} \divid \delta(T_k^{l_k})}$. 
At the same time, according to~(\ref{prop_th_1}),  
$\delta(T_i^{l_i})$ is a product of $\delta(T_{ic_i})$ and variables that are not equal to $T_{kc_k}$, whence by Lemma~\ref{div_lem}\ref{fact_graded} we have $T_{kc_k} \divid \delta(T_{ic_i})$. 
For the same reason, $T_{ic_i} \divid \delta(T_{kc_k})$, 
which contradicts Lemma~\ref{fr_lem}\ref{fr_lem_fg}. 

Thus $l_{i c_i} > 1$ holds for at most one~$i \in \frK$. 

The elements $\delta(T_i^{l_i})$ are proportional, that is there exists an~$f \in \RR$ such that \hbox{$\delta(T_i^{l_i}) \in \K f$} for any~$i$. 
It follows from equality~(\ref{prop_th_1}) that $\partial T_i^{l_i} / \partial T_{ic_i}$ divides $f$ for all $i \in \frK$. 
The application of Lemma~\ref{div_lem}\ref{fact_graded} yields that the product $\prod \limits_{i \in \frK} \partial T_i^{l_i} / \partial T_{ic_i}$ divides $f$. 
Denote this ratio by~$h$. 
From~(\ref{prop_th_1}) we have 
$$
f = h \prod \limits_{i \in \frK} \frac{\partial T_i^{l_i}}{\partial T_{ic_i}}, \quad
\delta(T_{ic_i}) = 
\begin{cases}
\beta_i h \prod \limits_{k \in \frK \setminus \{i\}} \frac{\partial T_k^{l_k}}{\partial T_{kc_k}}, &i \in \frK, \\
0, & i \notin \frK.
\end{cases}
$$

Let $\beta_i = 0$ for all $i \notin \frK$. Let us show that the sum of all $\beta_i$ is equal to $0$. Note that $\g$ should divide 
$$
\delta(\g) = \sum \limits_{i \in \frK} \delta(T_{ic_i}) \, \frac{\partial T_i^{l_i}}{\partial T_{ic_i}} = 
\sum \limits_{i \in \frK} 
\left(
	\beta_i h \prod \limits_{k \in \frK \setminus \{i\}} \frac{\partial T_k^{l_k}}{\partial T_{kc_k}}
\right) 
\frac{\partial T_i^{l_i}}{\partial T_{ic_i}} 
= h \left(\sum \limits_{i \in \frK} \beta_i\right) 
\prod \limits_{k \in \frK} \frac{\partial T_k^{l_k}}{\partial T_{kc_k}}
$$
in the algebra $\K[T_{ij}]$, 
since $\delta$ is a well-defined derivation of the algebra~$\RR$. 
But the trinomial~$\g$ and the monomial $\prod \limits_{k \in \frK} \left(\partial T_k^{l_k} / \partial T_{kc_k}\right)$ are coprime in the  factorial algebra~ $\K[T_{ij}]$. 
Hence $\g$ divides the sum $\sum \limits_{i \in \frK} \beta_i$, whence $\sum \limits_{i \in \frK} \beta_i = \sum \limits_{i} \beta_i = 0$. 

Now let $C = (c_0, c_1, c_2)$ be any sequence completing the $c_i$, where $i \in \frK$. Then we have $\delta = h \dCb$. The fact that $h$ belongs to the kernel of~$\dCb$ follows from Lemma~\ref{fr_lem}\ref{fr_lem_ker}. 
\end{proof}

We come to the main result of the paper. 

\begin{theorem}
\label{th_01m}
Let $\RR$ be a trinomial algebra. Suppose that there is at most one monomial in~$\g$ including a variable with exponent~$1$, that is $(l_{i_1j_1} = l_{i_2 j_2} = 1) \Rightarrow (i_1 = i_2)$. 
Then every $\deg$-homogeneous locally nilpotent derivation of the algebra~$\RR$ is elementary of Type~II. 
\end{theorem}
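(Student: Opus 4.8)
The plan is to reduce everything to Proposition~\ref{prop_th}: it suffices to prove that the three elements $\delta(T_0^{l_0}),\delta(T_1^{l_1}),\delta(T_2^{l_2})$ span a subspace of $\RR$ of dimension at most~$1$, after which the proposition gives that $\delta$ is elementary, and a short separate argument fixes the type. We may assume $\delta\ne0$. Since $\delta$ is a derivation of $\RR=\K[T_{ij}]/(\g)$, we have $\delta(\g)=0$ in $\RR$, that is $\delta(T_0^{l_0})+\delta(T_1^{l_1})+\delta(T_2^{l_2})=0$. By Lemma~\ref{3_lemma} each monomial $T_i^{l_i}$ contains at most one variable $T_{ic_i}$ with $\delta(T_{ic_i})\ne0$; set $\frK=\{i\mid \delta(T_i^{l_i})\ne0\}$ and write $\partial_i:=\partial T_i^{l_i}/\partial T_{ic_i}$, so that $\delta(T_i^{l_i})=\delta(T_{ic_i})\,\partial_i$ for $i\in\frK$. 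If $|\frK|\le 1$ the span is trivially at most one-dimensional, and if $|\frK|=2$ the two nonzero images are negatives of one another by the relation above, hence proportional. Thus the only configuration requiring work is $|\frK|=3$, and the aim is to show it cannot occur under the hypothesis.

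To rule out $|\frK|=3$ I would exploit the combinatorial hypothesis, which guarantees that at least two of the monomials, say $T_0^{l_0}$ and $T_1^{l_1}$, have all exponents greater than~$1$. In particular $l_{0c_0}>1$ and $l_{1c_1}>1$, whence $T_{0c_0}\mid\delta(T_0^{l_0})$ and $T_{1c_1}\mid\delta(T_1^{l_1})$, since $T_{ic_i}\mid\partial_i$ whenever $l_{ic_i}>1$. The target is the pair of cross-divisibilities $T_{1c_1}\mid\delta(T_{0c_0})$ and $T_{0c_0}\mid\delta(T_{1c_1})$: once these hold, Lemma~\ref{fr_lem}\ref{fr_lem_fg} applied to $f=T_{0c_0}$ and $g=T_{1c_1}$ forces $\delta(T_{0c_0})=0$ or $\delta(T_{1c_1})=0$, contradicting $0,1\in\frK$. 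To extract them I would lift $\delta$ to $\K[T_{ij}]$, so that $\g\mid\delta(\g)$, write $\delta(\g)=\g h$, and use $T_i^{l_i}=\tfrac{1}{l_{ic_i}}T_{ic_i}\partial_i$ to rewrite this identity in the form $\sum_{i}\partial_i\,B_i=0$ with $B_i=\delta(T_{ic_i})-\tfrac{1}{l_{ic_i}}T_{ic_i}h$. The three monomials $\partial_0,\partial_1,\partial_2$ involve pairwise disjoint groups of variables, so the syzygy is built from Koszul relations, and reducing the identity modulo the $K$-prime elements $T_{0c_0}$ and $T_{1c_1}$ while repeatedly invoking the $K$-primality and factorial $K$-grading of Lemma~\ref{div_lem}\ref{TijKprime}, \ref{fact_graded} should isolate $\delta(T_{0c_0})$ and $\delta(T_{1c_1})$ enough to yield the cross-divisibilities.

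With proportionality in hand, Proposition~\ref{prop_th} gives $\delta=h\,\dCb$ for some $\dCb$ as in Construction~\ref{dCb_constr} and $h\in\Ker\dCb$. It remains to determine the type. First, $\delta\ne0$ forces $|\frK|\ge2$: the construction yields $\sum_i\beta_i=0$, and if $\frK=\{i\}$ then $\beta_i=0$, contradicting $\delta(T_{ic_i})\ne0$. Next, if $\dCb$ were of case~\ref{dCbconstr_1} (Type~I), then all $\beta_i$ would be nonzero and at most one index~$i$ could satisfy $l_{ic_i}>1$; hence at least two indices would have $l_{ic_i}=1$, so at least two monomials would contain a variable $T_{ic_i}$ of exponent~$1$, contradicting the hypothesis. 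Therefore $\dCb$ is necessarily of case~\ref{dCbconstr_2}, and $\delta$ is elementary of Type~II.

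The main obstacle is precisely the case $|\frK|=3$ treated in the second paragraph. When all three images are nonzero, the divisibility transfer that is immediate in Proposition~\ref{prop_th} under the proportionality hypothesis is obstructed by the third summand, and $\deg\delta$ may lie in the weight cone, so the homogeneous component of $\RR$ containing the three images need not be one-dimensional. Controlling the term $T_{ic_i}h$ and the contribution of the third monomial well enough to draw the cross-divisibilities out of the syzygy identity is the technical heart of the proof, and it is here that the precise assumption on the exponents must be brought to bear.
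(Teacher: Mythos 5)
Your overall architecture matches the paper's: reduce to Proposition~\ref{prop_th} by showing $|\frK|\le 2$ (so that $\delta(\g)=0$ forces the three images $\delta(T_i^{l_i})$ to be proportional), obtain the contradiction for $|\frK|=3$ from the two cross-divisibilities $T_{0c_0}\divid\delta(T_{1c_1})$ and $T_{1c_1}\divid\delta(T_{0c_0})$ via Lemma~\ref{fr_lem}\ref{fr_lem_fg}, and then read off Type~II from the exponent hypothesis. But the one step that actually carries the theorem --- producing those cross-divisibilities --- is not proved in your text, and you flag this yourself in the final paragraph. The syzygy route you sketch does not visibly close: writing $\delta(\g)=\g h$ in $\K[T_{ij}]$ and expanding the Koszul syzygies of the pairwise coprime monomials $\partial_i=\partial T_i^{l_i}/\partial T_{ic_i}$ gives $\delta(T_{0c_0})=a\partial_1+b\partial_2+\tfrac{h}{l_{0c_0}}T_{0c_0}$, and after discarding the term $a\partial_1$ (divisible by $T_{1c_1}$) you still need $T_{1c_1}\divid b\partial_2+\tfrac{h}{l_{0c_0}}T_{0c_0}$, for which no mechanism is offered; reducing the identity modulo $T_{0c_0}$ only yields $T_{0c_0}\divid B_1\partial_1+B_2\partial_2$, not $T_{0c_0}\divid B_1$. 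Moreover, the lift of $\delta$ to $\K[T_{ij}]$ is determined only modulo $(\g)$, so the polynomials $B_i$ you manipulate are not canonical, whereas the divisibilities you need are statements in~$\RR$.

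The idea you are missing is to change the grading rather than to compute. For $i\in\frK$ with $\lici>1$, the assignment $\rdeg T_{ic_i}=[1]_\lici$ and $\rdeg T_{ks}=[0]_\lici$ for all other variables defines a $\Z_\lici$-grading on $\RR$ (every monomial of $\g$ has degree $[0]_\lici$), and by Lemma~\ref{deg_lem} the $\deg$-homogeneous derivation $\delta$ is automatically homogeneous for it. If $\rdeg\delta(T_{ic_i})\ne[0]_\lici$, then every monomial of $\delta(T_{ic_i})$ must contain the unique variable $T_{ic_i}$ of nonzero degree, so $T_{ic_i}\divid\delta(T_{ic_i})$, contradicting Lemma~\ref{fr_lem}\ref{fr_lem_f}. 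Hence $\rdeg\delta=[\lici-1]_\lici\ne[0]_\lici$, so $\rdeg\delta(T_{kc_k})\ne[0]_\lici$ for every $k\in\frK\setminus\{i\}$, and the same support argument gives $T_{ic_i}\divid\delta(T_{kc_k})$. Applying this to two indices of $\frK$ with exponent $\lici>1$ (which exist when $|\frK|=3$, by the hypothesis on $\g$) yields both cross-divisibilities at once and finishes the argument exactly as you intended. Your concluding step identifying the type as~II is correct.
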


\begin{proof}
Let us denote a $\deg$-homogeneous locally nilpotent derivation of the algebra~$\RR$ by~$\delta$. 
By~Lemma~\ref{3_lemma}, there is at most one variable $T_{ij}$ in every monomial $T_i^{l_i}$ such that $\delta(T_{ij}) \ne 0$. 
Let $\frK = \{i \mid \exists c_i \colon \delta(T_{ic_i}) \ne 0\}$. 

1) Fix any $i \in \frK$ such that $\lici > 1$. 
We claim that $T_{ic_i}$ divides $\delta(T_{kc_k})$ for any $k \in \frK \setminus \{i\}$. 

Denote by $\Z_m = \{[0]_m, [1]_m, \ldots, [m-1]_m\}$ the cyclic group of order~$m$. 
Consider the following $\Z_\lici\!\!$-grading on~$\RR$: 
\begin{center}
\renewcommand{\arraystretch}{1.4}
\begin{tabular}{c ||    c|c|    c     |c|   c    |c}
$T_{ks}$ & 
$T_{01}$ & $T_{02}$ & 
$\ldots$ & $T_{ic_i}$ & $\ldots$ &  
$T_{2n_2}$ \\
\hline
$\rdeg T_{ks}$ & 
$[0]_\lici$ & $[0]_\lici$ & 
$\ldots$ & $[1]_\lici$ & $\ldots$ & 
$[0]_\lici$
\end{tabular}
\end{center}
It is well defined since $\lici [1]_\lici = [0]_\lici$, i.e., all monomials in~$\g$ have the same degree~$[0]_\lici$. 
By Lemma~\ref{deg_lem}, the derivation $\delta$ is $\rdeg$-homogeneous. 

If $\rdeg \delta(T_{ic_i}) \ne [0]_\lici$ then $T_{ic_i}$ divides $\delta(T_{ic_i})$. 
Indeed, this inequality implies that the degrees of all monomials in $\delta(T_{ic_i})$ do not equal~$[0]_\lici$, 
and it is possible only if every monomial includes $T_{ic_i}$ 
(since $T_{ic_i}$ is a unique variable with a nonzero degree). 

On the other hand, the fact that $T_{ic_i}$ divides $\delta(T_{ic_i})$ together with $\delta(T_{ic_i}) \ne 0$ contradicts Lemma~\ref{fr_lem}\ref{fr_lem_f}. 
Thus $\rdeg \delta(T_{ic_i}) = [0]_\lici$. 
Whence $\rdeg \delta = [\lici - 1]_\lici \ne [0]_\lici$. 
In particular, $\rdeg \delta(T_{kc_k}) \ne [0]_\lici$ for $k \in \frK \setminus \{i\}$, 
and therefore $T_{ic_i}$ divides $\delta(T_{kc_k})$. 

2) Let us show that $|\frK| \le 2$. 
Assume the converse. 
By assumptions of the theorem,  
$|\{i \mid \lici = 1\}| \le 1$, whence $|\{i \in \frK\mid \lici > 1\}| \ge 2$. 
Take any distinct $i, k$ from the latter set. 
According to~1), $T_{ic_i}$ divides $\delta(T_{kc_k})$ and $T_{kc_k}$ divides $\delta(T_{ic_i})$, 
which contradicts Lemma~\ref{fr_lem}\ref{fr_lem_fg}. 

3) Thus $|\frK| \le 2$. Then there are at most two nonzero $\delta(T_i^{l_i})$. On the other hand, $\delta(T_0^{l_0})+\delta(T_1^{l_1}) + \delta(T_2^{l_2}) = 
\delta(\g) = 0$ in the algebra $\RR$. 
Consequently, $\delta(T_0^{l_0})$, $\delta(T_1^{l_1})$, and $\delta(T_2^{l_2})$ lie in a subspace of dimension~$1$. 
According to Proposition~\ref{prop_th}, the derivation~$\delta$ is elementary. In addition, there are at most two nonzero $\delta(T_i^{l_i})$. This implies that $\delta$ is elementary of Type~II. 
\end{proof}

If a trinomial contains a linear term (i.e., $n_il_{i1} = 1$ for some $i = 0, 1, 2$), then the corresponding trinomial hypersurface is isomorphic to the affine space~$\K^{n - 1}$. 
Further we assume that $n_il_{i1} > 1$ for all $i = 0, 1, 2$. 
According to~\cite[Theorem 1.1(ii)]{HaHe2013}, in this case the following statement holds. 

\begin{proposition}
\label{fact_free}
The following conditions are equivalent: 
\begin{enumerate}
\item the algebra $\RR$ is factorial; 
\item the group $K$ in Construction~\ref{gr_constr} is torsion free; 
\item the numbers $d_i := \gcd(l_{i1}, \ldots, l_{in_i})$ are pairwise coprime. 
\end{enumerate}
\end{proposition}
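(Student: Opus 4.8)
The plan is to prove the two equivalences $(2)\Leftrightarrow(3)$ and $(1)\Leftrightarrow(2)$ separately. The first is a purely arithmetic statement about the matrix~$L$ and can be settled by hand; the second rests on the graded factoriality of~$\RR$ recorded in Lemma~\ref{div_lem} together with the general theory of factorially graded rings. I would treat $(2)\Leftrightarrow(3)$ first, since it also identifies exactly which torsion primes are responsible for non-factoriality and thus feeds into the proof of $(1)\Leftrightarrow(2)$.

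For $(2)\Leftrightarrow(3)$: the matrix~$L$ has rank~$2$, and $K = \Z^n/\Im L^*$, so the Smith normal form of~$L$ shows that $K$ is torsion free if and only if the greatest common divisor of all $2\times 2$ minors of~$L$ equals~$1$ (this gcd is the product $s_1 s_2$ of the two invariant factors, which is~$1$ iff both are~$1$). I would then read off these minors from the block shape of~$L$: a minor built from two columns of the same block vanishes, while a minor built from a column of block~$i$ and a column of block~$i'$ with $i\ne i'$ equals $\pm\,l_{ij}\,l_{i'j'}$. Hence the nonzero minors are precisely the products $l_{ij}l_{i'j'}$ with $i\ne i'$, and by the elementary identity $\gcd_{j,j'}(a_j b_{j'}) = (\gcd_j a_j)(\gcd_{j'} b_{j'})$ their overall gcd is $\gcd(d_0d_1,\,d_0d_2,\,d_1d_2)$. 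It remains to verify the number-theoretic fact that $\gcd(d_0d_1,d_0d_2,d_1d_2)=1$ if and only if $d_0,d_1,d_2$ are pairwise coprime: if a prime~$p$ divides two of the~$d_i$ it divides all three products, and conversely pairwise coprimality forces the gcd to be~$1$ by a short prime-by-prime argument.

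For $(1)\Leftrightarrow(2)$: by Lemma~\ref{div_lem}\ref{fact_graded} the algebra~$\RR$ is factorially $K$\nbd graded and the~$T_{ij}$ are $K$\nbd prime. For $(2)\Rightarrow(1)$ I would invoke the general principle that a factorially $K$\nbd graded domain whose grading group is torsion free is already factorial in the usual sense: when $K$ is free one may choose a total order on~$K$ and pass between homogeneous and arbitrary factorizations by comparing leading terms, so that the $K$\nbd primes are genuine primes. For the converse $(1)\Rightarrow(2)$ I would argue contrapositively: if $K$ has torsion, then by $(2)\Leftrightarrow(3)$ some prime~$p$ divides two of the~$d_i$, say $p\mid d_0$ and $p\mid d_1$. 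Writing $u = T_0^{l_0/p}$ and $v = T_1^{l_1/p}$ (dividing every exponent by~$p$), the relation $\g=0$ gives $T_2^{l_2} = -(u^p+v^p)$, which factors as a product of~$p$ pairwise non-proportional linear forms in~$u$ and~$v$; this factorization is incompatible with the factorization of the monomial~$T_2^{l_2}$ into the $K$\nbd prime generators~$T_{2j}$, reproducing the quadric-cone phenomenon of Example~\ref{ex_gr_222} and exhibiting non-unique factorization.

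The main obstacle is the direction $(2)\Rightarrow(1)$, together with making the failure in $(1)\Rightarrow(2)$ rigorous. The former cannot be carried out by bare hands for an arbitrary trinomial and genuinely relies on the general structure theory of factorially graded rings. The latter requires care because the linear forms $u+\zeta v$ need not be $K$\nbd homogeneous, since the degrees of~$u$ and~$v$ differ by a torsion element of~$K$; one must therefore check directly that the two factorizations of~$T_2^{l_2}$ share no associated irreducible factor. For this reason I would, as the author does, ultimately appeal to the factoriality criterion of~\cite{HaHe2013}, whose proof supplies precisely this comparison of factorizations in the graded setting.
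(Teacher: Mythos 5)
The paper gives no proof of this proposition at all --- it is imported verbatim from \cite[Theorem~1.1(ii)]{HaHe2013} --- so any self-contained argument is by definition a different route, and yours is a reasonable one. Your proof of $(2)\Leftrightarrow(3)$ is complete and correct: the reduction via Smith normal form to the gcd of the $2\times 2$ minors of $L$, the observation that the nonzero minors are exactly $\pm\, l_{ij}l_{i'j'}$ with $i\ne i'$, the identity $\gcd_{j,j'}(a_jb_{j'})=(\gcd_j a_j)(\gcd_{j'}b_{j'})$, and the final equivalence $\gcd(d_0d_1,d_0d_2,d_1d_2)=1\Leftrightarrow d_0,d_1,d_2$ pairwise coprime all check out; this part is a genuine addition over the paper. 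For $(1)\Leftrightarrow(2)$ you correctly identify the two nontrivial inputs: the direction $(2)\Rightarrow(1)$ rests on the theorem (going back to Anderson, and used in \cite{HaHe2013}) that a factorially graded Krull domain with free grading group is factorial --- your ``compare leading terms'' sketch is not itself a proof of this, but you acknowledge that and defer to the reference, exactly as the author does for the entire proposition. One point you should make explicit rather than leave implicit: the standing hypothesis $n_il_{i1}>1$ for all $i$, stated in the paper immediately before the proposition, is essential for $(1)\Rightarrow(2)$ and is silently used in your factorization argument. For instance, $\g=T_{01}^2+T_{11}^2+T_{21}$ has $d_0=d_1=2$, so $K$ has torsion by your part $(2)\Leftrightarrow(3)$, yet $\RR\cong\K[T_{01},T_{11}]$ is factorial: here $T_2^{l_2}=T_{21}=-(u^2+v^2)$ is simply a reducible element and the two factorizations you exhibit are perfectly compatible. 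So the ``incompatibility'' you invoke requires knowing that each $T_{2j}$ is irreducible (not merely $K$-prime as in Lemma~\ref{div_lem}\ref{TijKprime}) in $\RR$, which is precisely where $n_il_{i1}>1$ and the finer analysis of \cite{HaHe2013} enter; flagging this would close the only real soft spot in the writeup.
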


This proposition enables us to prove some consequences of Theorem~\ref{th_01m}. 

\begin{corollary}
\label{cor_1}
If an algebra $\RR$ is non-factorial, then any $\deg$-homogeneous locally nilpotent derivation of~$\RR$ is elementary of Type~II. 
\end{corollary}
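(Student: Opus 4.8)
The plan is to deduce this directly from Theorem~\ref{th_01m}. That theorem already shows that every $\deg$-homogeneous locally nilpotent derivation is elementary of Type~II \emph{provided} at most one monomial of $\g$ contains a variable with exponent~$1$. So it suffices to verify that non-factoriality of $\RR$ forces this hypothesis, and I would do so by contraposition: if two distinct monomials each contain a variable with exponent~$1$, then $\RR$ is factorial.

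The bridge between the two conditions is Proposition~\ref{fact_free}, which characterizes factoriality through the numbers $d_i = \gcd(l_{i1}, \ldots, l_{in_i})$. First I would observe that if a monomial $T_i^{l_i}$ contains a variable with exponent~$1$, say $l_{ij} = 1$, then $d_i = 1$, because the gcd of a tuple one of whose entries is~$1$ equals~$1$. Hence, assuming two distinct monomials $T_{i_1}^{l_{i_1}}$ and $T_{i_2}^{l_{i_2}}$ with $i_1 \ne i_2$ each carry a variable with exponent~$1$, I get $d_{i_1} = d_{i_2} = 1$.

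The next step is the elementary number-theoretic remark that, for three numbers $d_0, d_1, d_2$, having two of them equal to~$1$ already makes all three pairwise coprime: the pair $(d_{i_1}, d_{i_2})$ is coprime because both entries are~$1$, and each pairing with the remaining $d_{i_3}$ is coprime because $\gcd(1, d_{i_3}) = 1$. By the equivalence of conditions (a) and (c) in Proposition~\ref{fact_free}, this would make $\RR$ factorial, contrary to assumption. I should also check that the standing hypothesis $n_i l_{i1} > 1$ needed for Proposition~\ref{fact_free} is harmless here: a trinomial with a linear term defines an affine space, which is factorial, so such cases are already excluded by non-factoriality.

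Consequently, when $\RR$ is non-factorial, at most one monomial of $\g$ contains a variable with exponent~$1$, the hypothesis of Theorem~\ref{th_01m} holds, and every $\deg$-homogeneous locally nilpotent derivation of $\RR$ is elementary of Type~II. I do not expect a genuine obstacle in this argument; the only point that requires a little care is that the passage from ``a variable with exponent~$1$'' to ``$d_i = 1$'' is one-directional (a value $d_i = 1$ may arise without any exponent being~$1$, as with $\gcd(2,3)$), but it is precisely this direction that the contrapositive needs.
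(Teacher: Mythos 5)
Your argument is correct and is essentially identical to the paper's proof: both pass to the contrapositive, note that two monomials with an exponent-$1$ variable force at least two of the $d_i$ to equal~$1$, hence the $d_i$ are pairwise coprime, and conclude factoriality via Proposition~\ref{fact_free}. Your extra remark about the standing hypothesis $n_i l_{i1} > 1$ is a sensible precaution but not needed, since the paper assumes it throughout that section.
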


\begin{proof}
Let $\RR$ be a trinomial algebra that does not satisfy the conditions of Theorem~\ref{th_01m}. It suffices to show that $\RR$ is factorial. 

By assumption, $\g$ has at least two monomials including variables with exponent~$1$. Therefore at least two of $d_0, d_1, d_2$ are equal to~$1$, whence the numbers $d_0, d_1, d_2$ are pairwise coprime. By Proposition~\ref{fact_free}, $\RR$ is factorial. 
\end{proof}

\begin{corollary}
\label{cor_2}
The algebra~$\RR$ admits a nonzero $\deg$-homogeneous locally nilpotent derivation if and only if $l_{ij} = 1$ for some $i = 0, 1, 2$, $j = 1, \ldots, n_i$. 
\end{corollary}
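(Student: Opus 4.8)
The plan is to prove the two implications separately, leaning on Theorem~\ref{th_01m} for the forward (``only if'') direction and on an explicit instance of Construction~\ref{dCb_constr} for the backward (``if'') direction.

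For the forward implication I would argue by contraposition: assume $l_{ij} > 1$ for all $i, j$ and show that the only $\deg$-homogeneous locally nilpotent derivation is zero. Under this assumption no monomial of~$\g$ contains a variable of exponent~$1$, so the hypothesis of Theorem~\ref{th_01m} holds vacuously and every $\deg$-homogeneous locally nilpotent derivation is elementary of Type~II, i.e., of the form $h\dCb$ with $\dCb$ as in case~\ref{dCbconstr_2} of Construction~\ref{dCb_constr}. The key observation is that case~\ref{dCbconstr_2} can never be instantiated here: it requires a unique~$i_0$ with $\beta_{i_0} = 0$ together with at most one index $i_1 \ne i_0$ having $l_{i_1 c_{i_1}} > 1$, whereas for any choice of $C = (c_0, c_1, c_2)$ both indices different from~$i_0$ satisfy $\lici > 1$. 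Hence there is no nonzero derivation of Type~II, and therefore no nonzero $\deg$-homogeneous locally nilpotent derivation.

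For the converse I would construct a derivation explicitly. Suppose $l_{ac} = 1$ for some index~$a$ and some $c \in \{1, \ldots, n_a\}$. Writing $\{0,1,2\} = \{a, b, i_0\}$ with $i_0 \ne a$, I would set $c_a = c$, choose $c_b$ and $c_{i_0}$ arbitrarily, and put $\beta_{i_0} = 0$, $\beta_a = 1$, $\beta_b = -1$ (so that $\beta_0 + \beta_1 + \beta_2 = 0$ and $i_0$ is the unique vanishing index). Since $l_{a c_a} = l_{ac} = 1$, among the two indices $\ne i_0$ at most one (namely~$b$) has exponent~$> 1$, so the data $C, \beta$ fall under case~\ref{dCbconstr_2} and define a derivation~$\dCb$. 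I would then check it is nonzero, since $\dCb(T_{ac}) = \beta_a\, \partial T_b^{l_b}/\partial T_{b c_b} \ne 0$. By the lemma following Construction~\ref{dCb_constr} the derivation~$\dCb$ is locally nilpotent, and it is $\deg$-homogeneous by \cite[Theorem~4.4]{ArHaHeLi2012}; thus $\dCb$ is the desired nonzero $\deg$-homogeneous locally nilpotent derivation (elementary with $h = 1 \in \Ker\dCb$).

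The only real obstacle is the bookkeeping around the Type~II admissibility condition ``at most one $i_1 \ne i_0$ with $l_{i_1 c_{i_1}} > 1$'': it is precisely this constraint that simultaneously renders Type~II derivations impossible when all exponents exceed~$1$ and guarantees that a single exponent-$1$ variable suffices to build one. I would also record that the standing assumption $n_i l_{i1} > 1$ is in force, so the degenerate case in which $X(\g)$ is an affine space (where some $l_{ij} = 1$ and nonzero derivations trivially exist) need not be treated separately.
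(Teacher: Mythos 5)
Your proposal is correct and follows essentially the same route as the paper: the ``if'' direction by exhibiting a derivation $\dCb$ from case~\ref{dCbconstr_2} of Construction~\ref{dCb_constr} (you merely spell out the choice of $C$ and $\beta$ more explicitly), and the ``only if'' direction by combining Theorem~\ref{th_01m} with the observation that no $\dCb$ can be instantiated when all $l_{ij}\ge 2$. The extra bookkeeping you supply (verifying the admissibility condition in case~\ref{dCbconstr_2} and that $\dCb(T_{ac})\ne 0$) is sound and only makes the paper's argument more explicit.
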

\begin{proof}
If the condition $l_{ij} = 1$ holds for some pair $(i, j)$, then there exists a derivation of the form $\dCb$ (see Construction~\ref{dCb_constr}, case~\ref{dCbconstr_2}). It is $\deg$-homogeneous and locally nilpotent. 

Let us prove the inverse implication by contradiction. 
Let $l_{ij} \ge 2$ for all $i = 0, 1, 2$, $j = 1, \ldots, n_i$. 
Together with Theorem~\ref{th_01m} this implies that all homogeneous locally nilpotent derivations of~$\RR$ are of the form $h \dCb$ (where $h \in \Ker \dCb$). 
However it follows from Construction~\ref{dCb_constr} that there is no derivation of the form $\dCb$ under the condition $l_{ij} \ge 2$ for all $i = 0, 1, 2$, $j = 1, \ldots, n_i$. This contradiction proves the corollary. 
\end{proof}

An affine variety is called \textit{rigid} if its algebra of regular functions admits no nonzero locally nilpotent derivation. Geometrically, this means that the variety admits no non-trivial $\G_a$-action. 
The automorphism group of a rigid trinomial variety is described in~\cite[Theorem~5.5]{ArGa2017}. 

Corollary~\ref{cor_2} enables us to obtain a new proof of the result proved earlier in~\cite[Theorem~1]{Ar2016}. 

\begin{corollary}
\label{cor_3}
A factorial trinomial hypersurface~$X(\g)$ is rigid if and only if 
$l_{ij} \ge 2$ for any $i = 0, 1, 2$, $j = 1, \ldots, n_i$.
\end{corollary}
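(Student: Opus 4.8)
The plan is to deduce this criterion directly from Corollary~\ref{cor_2}, which already characterizes the existence of a nonzero \emph{$\deg$-homogeneous} locally nilpotent derivation. Since rigidity concerns \emph{all} locally nilpotent derivations and not only the homogeneous ones, the crux is to show that in the factorial setting the existence of a nonzero locally nilpotent derivation is equivalent to the existence of a nonzero $\deg$-homogeneous one; once this is established, the corollary follows by combining it with Corollary~\ref{cor_2} and passing to the contrapositive.

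First I would dispose of the easy implication: a nonzero $\deg$-homogeneous locally nilpotent derivation is in particular a nonzero locally nilpotent derivation, so if $l_{ij} = 1$ for some pair $(i,j)$, then Corollary~\ref{cor_2} produces such a derivation and $X(\g)$ is not rigid. For the reverse implication I would start from an arbitrary nonzero locally nilpotent derivation of $\RR$ and invoke Proposition~\ref{fact_free}: since $\RR$ is factorial, the grading group $K$ is torsion free, and being finitely generated it is therefore isomorphic to $\Z^k$ for some $k$. Consequently the $\deg$-grading is a $\Z^k$-grading, so Lemma~\ref{fact_hom_der_lem} applies and yields a nonzero $\deg$-homogeneous locally nilpotent derivation. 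By Corollary~\ref{cor_2} this forces $l_{ij} = 1$ for some $(i,j)$.

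Putting the two implications together, a factorial $\RR$ admits a nonzero locally nilpotent derivation if and only if some $l_{ij} = 1$; taking contrapositives then gives that $X(\g)$ is rigid precisely when $l_{ij} \ge 2$ for all $i$ and $j$, which is the desired statement.

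The hard part, or rather the only delicate point, is the passage from an arbitrary locally nilpotent derivation to a homogeneous one. This is exactly where factoriality is indispensable: Lemma~\ref{fact_hom_der_lem} is stated for $\Z^k$-graded domains, and a nontrivial torsion part in $K$ would obstruct its application. Factoriality guarantees, via Proposition~\ref{fact_free}, that $K$ is torsion free, so no such obstruction arises and the reduction goes through without difficulty.
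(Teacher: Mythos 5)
Your proposal is correct and follows essentially the same route as the paper: the ``only if'' direction comes from Corollary~\ref{cor_2} (equivalently, the existence of a derivation $\dCb$ when some $l_{ij}=1$), and the ``if'' direction combines Proposition~\ref{fact_free} (factoriality implies $K$ is torsion free, hence $K\cong\Z^k$) with Lemma~\ref{fact_hom_der_lem} to reduce arbitrary locally nilpotent derivations to $\deg$-homogeneous ones, to which Corollary~\ref{cor_2} applies. Your phrasing in the contrapositive is only a cosmetic difference from the paper's argument.
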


\begin{proof}
If the condition $l_{ij} \ge 2$ does not hold for some $(i, j)$, then there exists a derivation of the form~$\dCb$ (see Construction~\ref{dCb_constr}, case~\ref{dCbconstr_2}). This implies the ''only if'' part. 

Let us prove the ''if'' part. According to~Corollary~\ref{cor_2}, there exists no nonzero $\deg$-homogeneous locally nilpotent derivation of the algebra~$\RR$. By Proposition~\ref{fact_free} together with factoriality we have that the group~$K$ is torsion free. Then by Lemma~\ref{fact_hom_der_lem} the nonexistence of nonzero $\deg$-homogeneous locally nilpotent derivations implies the nonexistence of any nonzero locally nilpotent derivations of~$\RR$. This means rigidity. 
\end{proof}

\begin{example}
\label{ex_th_1332}
Consider $\g = T_{01} T_{02}^3 + T_{11}^3 + T_{21}^2$ 
(see Examples~\ref{ex_gr_1332} and \ref{ex_dCb_1332}). 
This trinomial satisfies the conditions of Theorem~\ref{th_01m}, 
whence all $\deg$-homogeneous locally nilpotent derivations~$\delta$ of~$\RR$ 
are divided into two classes:
\begin{enumerate}
\item \label{ex_hdCb_1332}
$
\delta = 2h\beta_0 T_{21} \dfrac{\partial}{\partial T_{01}} - h\beta_0 T_{02}^3 \dfrac{\partial}{\partial T_{21}}$, 
where $h \in \Ker \dCb$, $C = (1, 1, 1)$, $\beta = (\beta_0, 0, -\beta_0)$;
\item
$\delta = 3h\beta_0 T_{11}^2 \dfrac{\partial}{\partial T_{01}} - h\beta_0 T_{02}^3 \dfrac{\partial}{\partial T_{11}}$,
where $h \in \Ker \dCb$, $C = (1, 1, 1)$, $\beta = (\beta_0, -\beta_0, 0)$.
\end{enumerate}

Some of them are not primitive, i.e., $\deg \delta = \deg h \dCb$ may lie in the weight cone~$\omega$ for some $h \in \Ker \dCb$.  
Let us consider an example. The weight monoid is generated by the vectors
$$
\deg T_{01} = \begin{pmatrix} -3 \\ 3 \end{pmatrix}, \quad
\deg T_{02} = \begin{pmatrix} 1 \\ 1 \end{pmatrix}, \quad
\deg T_{11} = \begin{pmatrix} 0 \\ 2 \end{pmatrix}, \quad
\deg T_{21} = \begin{pmatrix} 0 \\ 3 \end{pmatrix},
$$
whence the weight cone~$\omega$ is the angle $\{-u \le v \le u\}$, where $\deg = \begin{pmatrix} u \\ v \end{pmatrix}$ (see  Figure~\ref{fig_ex_th_1332}). 

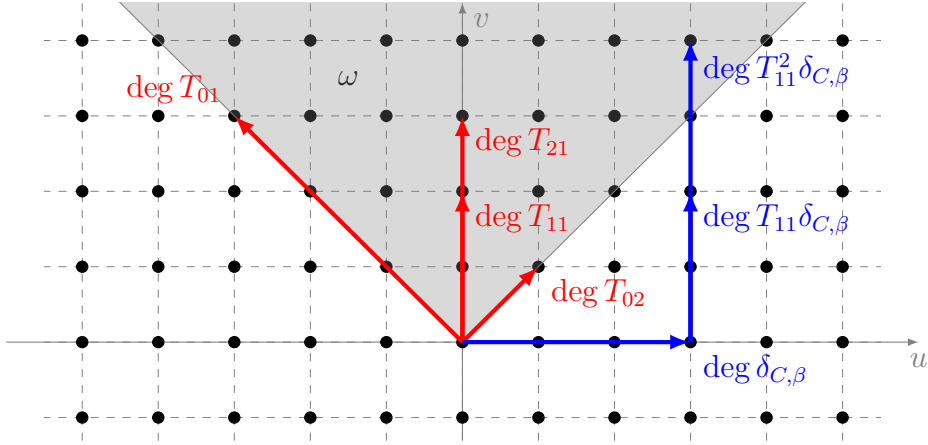
\begin{figure}[ht]
  \centering
  \begin{tikzpicture}
    \coordinate (Origin)   at (0,0);
    \coordinate (XAxisMin) at (-6,0);
    \coordinate (XAxisMax) at (6,0);
    \coordinate (YAxisMin) at (0,-1.3);
    \coordinate (YAxisMax) at (0,4.5);
    \draw [thin, gray,-latex] (XAxisMin) -- (XAxisMax) node [below] {$u$};
    \draw [thin, gray,-latex] (YAxisMin) -- (YAxisMax) node [below right] {$v$};

    \clip (-5.5,-1.3) rectangle (5.5cm,4.5cm); 
    \pgftransformcm{0.5}{0}{0}{0.5}{\pgfpoint{0cm}{0cm}}
    \coordinate (Bone) at (2,0);
    \coordinate (Btwo) at (0,2);
    \draw[style=help lines,dashed] (-14,-14) grid[step=2cm] (14,14);
    \foreach \x in {-7,-6,...,5}{
      \foreach \y in {-7,-6,...,4}{ 
        \node[draw,circle,inner sep=1.5pt,fill] at (2*\x,2*\y) {};
      }
    }
    \node at (-3,7) {$\omega$};
    \draw [thin,-latex,gray, fill=gray, fill opacity=0.3] 
         (0,0) -- ($-5*(Bone) + 5*(Btwo)$) -- ($5*(Bone) + 5*(Btwo)$) -- cycle;
    \draw [ultra thick,-latex,red] (Origin)
        -- ($-3*(Bone) + 3 *(Btwo)$) node [above left] {$\deg T_{01}$};
    \draw [ultra thick,-latex,red] (Origin)
        -- ($(Bone)+(Btwo)$) node [below right] {$\deg T_{02}$};
    \draw [ultra thick,-latex,red] (Origin)
        -- ($2*(Btwo)$) node [below right] {$\deg T_{11}$};
    \draw [ultra thick,-latex,red] (Origin)
        -- ($3*(Btwo)$) node [below right] {$\deg T_{21}$};
    \draw [ultra thick,-latex,blue] (Origin)
        -- ($3*(Bone)$) node [below right] {$\deg \dCb$};
    \draw [ultra thick,-latex,blue] ($3*(Bone)$)
        -- ($3*(Bone) + 2*(Btwo)$) node [below right] {$\deg T_{11} \dCb$};
    \draw [ultra thick,-latex,blue] ($3*(Bone)$)
        -- ($3*(Bone) + 4*(Btwo)$) node [below right] {$\deg T_{11}^2 \dCb$};
  \end{tikzpicture}
  \caption{The weight cone in Example~\ref{ex_th_1332}}
  \label{fig_ex_th_1332}
\end{figure}
In case~\ref{ex_hdCb_1332}, we have $\deg \dCb = \begin{pmatrix} 3 \\ 0 \end{pmatrix}$. 
Since for any $k \in \N$ the polynomial $h = T_{11}^k$ belongs to the kernel of~$\dCb$, it follows that 
the derivation $T_{11}^k \dCb$ is homogeneous and locally nilpotent. 
Its degree equals 
$k\begin{pmatrix} 0 \\ 2 \end{pmatrix} + \begin{pmatrix} 3 \\ 0 \end{pmatrix} = \begin{pmatrix} 3 \\ 2k \end{pmatrix}$ and 
lies in the weight cone for $k \ge 2$. 
\end{example}

\begin{example}
\label{ex_th_222}
Let $\g = T_{01}^2 + T_{11}^2 + T_{21}^2$ (see Examples~\ref{ex_gr_222} and \ref{ex_dCb_222}). 
According to Corollary~\ref{cor_2}, there exists no $\deg$-homogeneous locally nilpotent derivation of~$\RR$.

Nevertheless the algebra~$\RR$ admits a nonzero locally nilpotent derivation. For example, consider the following derivation~$\delta$: 
$$
\delta(T_{01}) = i T_{21}, \quad 
\delta(T_{11}) = -T_{21}, \quad 
\delta(T_{21}) = -i T_{01} + T_{11},
$$
where $i \in \K$, $i^2 = -1$. 
The derivation $\delta$ is locally nilpotent, since $\delta(-iT_{01}+T_{11}) = 0$, $\delta^2(T_{21}) = 0$, $\delta^3(T_{01}) = 0$. 
\end{example}

\section{Open questions}
\label{open}

In this section we discuss several open questions related to locally nilpotent derivations of trinomial algebras. 

\begin{conjecture}
\label{conj}
All $\deg$-homogeneous locally nilpotent derivations of a trinomial algebra are elementary. 
\end{conjecture}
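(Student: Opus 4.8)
The plan is to reduce the conjecture to the one situation left open by Theorem~\ref{th_01m} and to settle that situation by a sharpening of the argument behind Proposition~\ref{prop_th}. By Corollary~\ref{cor_1} the statement already holds whenever $\RR$ is non-factorial, so by Proposition~\ref{fact_free} it remains to treat factorial algebras in which at least two of the monomials $T_i^{l_i}$ contain a variable of exponent~$1$---precisely the case excluded from Theorem~\ref{th_01m}. Fix a $\deg$-homogeneous locally nilpotent derivation~$\delta$, use Lemma~\ref{3_lemma} to select at most one distinguished variable $T_{ic_i}$ in each block, and put $\frK=\{i\mid\delta(T_{ic_i})\ne0\}$ and $P_i=\partial T_i^{l_i}/\partial T_{ic_i}$. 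Running the $\Z_{\lici}$-grading argument of Theorem~\ref{th_01m} without change shows that at most one $i\in\frK$ has $\lici>1$, and that for such an exceptional~$i$ the $K$-prime $T_{ic_i}$ divides $\delta(T_{kc_k})$ for every other $k\in\frK$ (note that this part of the theorem's proof does not use its hypothesis; only the subsequent bound $|\frK|\le2$ does).

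If $|\frK|\le2$ nothing new is needed: at most two of the images $\delta(T_i^{l_i})=P_i\,\delta(T_{ic_i})$ are nonzero, and the syzygy $\sum_i\delta(T_i^{l_i})=\delta(\g)=0$ then forces them into a one-dimensional subspace, so Proposition~\ref{prop_th} yields an elementary derivation (of Type~II). All the difficulty is thus concentrated in the case $|\frK|=3$, where all three images are nonzero. Here they no longer lie on a line but only in the plane $\{x_0+x_1+x_2=0\}$, so Proposition~\ref{prop_th} is not available and one must show directly that $\delta$ is elementary of Type~I, that is,
$$
\delta(T_{ic_i})=\beta_i\,h\,P_{k}P_{m}\qquad(\{i,k,m\}=\{0,1,2\}),
$$
for a single homogeneous $h\in\RR$ and scalars $\beta_i$ with $\beta_0+\beta_1+\beta_2=0$.

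To prove this I would start from the single syzygy $P_0\,\delta(T_{0c_0})+P_1\,\delta(T_{1c_1})+P_2\,\delta(T_{2c_2})=0$ in~$\RR$ and exploit the factorial $K$-grading of Lemma~\ref{div_lem}. Every variable $T_{kj}$ with $j\ne c_k$ lies in $\Ker\delta$ and is $K$-prime, so the aim is to show, block by block, that the ``non-distinguished'' monomial $\prod_{j\ne c_k}T_{kj}^{l_{kj}}$ divides $\delta(T_{ic_i})$ for $i\ne k$; combined with the power of $T_{kc_k}$ already supplied by the torsion grading for the exceptional block, this would give $P_k\divid\delta(T_{ic_i})$ whenever $k\ne i$. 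A concluding coprimality computation, parallel to the end of Proposition~\ref{prop_th}, should then force the three cofactors $\delta(T_{ic_i})/(P_kP_m)$ to agree up to scalars, producing the common factor~$h$; that $h\in\Ker\delta$ follows from Lemma~\ref{fr_lem}\ref{fr_lem_ker}, and $\beta_0+\beta_1+\beta_2=0$ is read off from $\delta(\g)=0$ exactly as before.

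The hard part is this divisibility step for the blocks with $\lici=1$. The torsion-grading trick supplies divisibility only by the distinguished variable of the \emph{unique} block of exponent greater than~$1$; for a linear block the associated grading is by $\Z_1$ and is vacuous. One therefore loses the proportionality of the $\delta(T_i^{l_i})$ that drove Proposition~\ref{prop_th} and must instead extract the common factor~$h$ from the lone relation $\delta(\g)=0$ together with local nilpotency. Making this extraction work---showing that the a priori distinct kernel elements $\delta(T_{ic_i})/(P_kP_m)$ are genuinely forced to coincide---is, I expect, the main obstacle, and is where the interplay between Lemma~\ref{fr_lem} and the factorial grading of Lemma~\ref{div_lem} must be pushed furthest.
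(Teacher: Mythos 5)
There is a genuine gap here, and it is worth being explicit that the statement you are proving is stated in the paper as Conjecture~\ref{conj}, i.e., as an \emph{open problem}: the paper itself offers no proof, only partial confirmations (Theorem~\ref{th_01m}, Corollary~\ref{cor_1}, and two worked examples drawn from~\cite{Ko2014_eng}). Your reduction is sound as far as it goes and matches the paper's own analysis of what remains: Corollary~\ref{cor_1} disposes of the non-factorial case, and the $\Z_{\lici}$\nbd grading argument from step~1) of the proof of Theorem~\ref{th_01m}, combined with Lemma~\ref{fr_lem}\ref{fr_lem_fg}, does show without the theorem's hypothesis that at most one $i\in\frK$ has $\lici>1$; when $|\frK|\le 2$ the syzygy $\delta(\g)=0$ puts the images $\delta(T_i^{l_i})$ on a line and Proposition~\ref{prop_th} finishes. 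All of this is already in the paper.

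The problem is the case $|\frK|=3$ with at least two linear blocks, which is exactly the content of the conjecture that the paper leaves open. Your text does not prove it: you describe a target (that $P_k$ divides $\delta(T_{ic_i})$ for $k\ne i$, and that the three cofactors $\delta(T_{ic_i})/(P_kP_m)$ coincide up to scalars) and then state that establishing the divisibility for the linear blocks and the coincidence of the cofactors ``is, I expect, the main obstacle.'' That obstacle is the entire difficulty. The single relation $P_0\,\delta(T_{0c_0})+P_1\,\delta(T_{1c_1})+P_2\,\delta(T_{2c_2})=0$ in $\RR$ does not by itself force $P_k\divid\delta(T_{ic_i})$: for instance it is consistent with $\delta(T_{0c_0})$ acquiring only part of the monomial $\prod_{j\ne c_1}T_{1j}^{l_{1j}}$, with the deficit compensated between the other two terms; the torsion-grading trick is genuinely unavailable for a block with $\lici=1$, and local nilpotency has to enter in some new way (as it visibly does in the Kotenkova examples, where the kernel elements multiplying $\dCb$ include non-monomial factors such as $\alpha_1T_{01}T_{02}-\alpha_0T_{11}T_{12}$, showing that the cofactors need not be monomials and that a naive prime-by-prime bookkeeping will not suffice). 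Until that step is carried out, the argument confirms only what Theorem~\ref{th_01m} and Corollary~\ref{cor_1} already give, and the conjecture remains open.
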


In the proof of Theorem~\ref{th_01m} we apply Proposition~\ref{prop_th} to confirm Conjecture~\ref{conj} for trinomial algebras~$\RR$ corresponding to trinomials~$\g$ 
under the following condition: there exists at most one monomial in $\g$ including variables with exponent~$1$. 
We would like to apply Proposition~\ref{prop_th} in the remaining case too, that is, when there exist at least two monomials in~$\g$ including variables with exponent~$1$. 
In this case, by Proposition~\ref{fact_free}, the algebra~$\RR$ is factorial, the group~$K$ is torsion free, and the quasitorus, whose action corresponds to the $\deg$-grading, is a torus of dimension~$n - 2$. 
In~\cite{Ko2014_eng}, all $\deg$-homogeneous locally nilpotent derivations for several trinomial hypersurfaces are described. These hypersurfaces do not satisfy the conditions of Theorem~\ref{th_01m}. 
In particular, the following statements are proved in \cite[Theorem~3.22 and Theorem~3.24]{Ko2014_eng}. 

\begin{example}[compare with Example~\ref{ex_dCb_11112}]
Any $\deg$-homogeneous locally nilpotent derivation of the algebra~$\RR$, where 
$\g = T_{01}T_{02} + T_{11}T_{12} + T_{21}^2$, has the form 
\begin{gather*}
\lambda T_{0i}^k T_{1j}^l T_{21}^p 
\left(T_{1j} \dfrac{\partial}{\partial T_{1\bar i}} - T_{0i} \dfrac{\partial}{\partial T_{1\bar j}}\right) \quad\text{or} 
\\
T_{0i}^k T_{1j}^l (\alpha_1 T_{01}T_{02} - \alpha_0 T_{11}T_{12})^p 
\left(\alpha_0 T_{1j} T_{21} \dfrac{\partial}{\partial T_{0\bar i}} 
+ \alpha_1 T_{0i} T_{21} \dfrac{\partial}{\partial T_{1\bar j}}
- \dfrac{\alpha_0 + \alpha_1}{2} T_{0i} T_{1j} \dfrac{\partial}{\partial T_{21}}
\right), 
\end{gather*}
where $k, l, p \in \Zgezero$, $\{i, \bar i\} = \{j, \bar j\} = \{1, 2\}$,  $\alpha_0, \alpha_1, \lambda \in \K$, $\alpha_0 + \alpha_1 \ne 0$. 
\end{example}

\begin{example}
Any $\deg$-homogeneous locally nilpotent derivation of the algebra~$\RR$, where 
$\g = T_{01}T_{02} + T_{11}T_{12} + T_{21}T_{22}$, has the form 
\begin{gather*}
T_{0i_0}^{k_0} T_{1i_1}^{k_1} T_{2i_2}^{k_2} (\alpha_2 T_{11}T_{12} - \alpha_1 T_{21}T_{22})^p 
\left(\alpha_0 T_{1i_1} T_{2i_2} \dfrac{\partial}{\partial T_{0\bar i_0}} 
+ \alpha_1 T_{0i_0} T_{2i_2} \dfrac{\partial}{\partial T_{1\bar i_1}} 
+ \alpha_2 T_{0i_0} T_{1i_1} \dfrac{\partial}{\partial T_{2\bar i_2}} 
\right), 
\end{gather*}
where $k_0, k_1, k_2, p \in \Zgezero$, $\{i_0, \bar i_0\} = \{i_1, \bar i_1\} = \{i_2, \bar i_2\} = \{1, 2\}$,  
$\alpha_0, \alpha_1, \alpha_2 \in \K$, $(\alpha_0, \alpha_1, \alpha_2) \ne (0, 0, 0)$, and \hbox{$\alpha_0 + \alpha_1 + \alpha_2 = 0$}. 
\end{example}

One can check that any derivation~$\delta$ from these examples is elementary, so Conjecture~\ref{conj} holds in these cases. 
Moreover, $\delta(T_0^{l_0}), \delta(T_1^{l_1})$, and $\delta(T_2^{l_2})$ lie in a subspace of dimension~$1$, hence Conjecture~\ref{conj} can be confirmed in these cases by Proposition~\ref{prop_th} as well. 

\smallskip

Besides the ''finest'' $\deg$-grading on the algebra~$\RR$ by the group~$K$ one can consider another grading on~$\RR$ by the torsion free component~$K_0$ of~$K$. 
Notice that the condition to be homogeneous with respect to the $\deg$-grading by the group~$K$ is more restrictive than the condition to be homogeneous with respect to the grading by~$K_0$. 
According to Proposition~\ref{fact_free}, the $\deg$-grading by~$K$ coincides with the grading by~$K_0$ if and only if the algebra~$\RR$ is factorial. 

\begin{example}
Let $\g = T_{01}^2 + T_{11}^2 + T_{21}^2$ (see Examples~\ref{ex_gr_222}, \ref{ex_dCb_222}, and \ref{ex_th_222}). 
According to Example~\ref{ex_gr_222}, we have $K = \Z + \Z_2 + \Z_2$. Therefore $K_0 = \Z$ and the degree with respect to the grading by~$K_0$ is equal to the standard degree of a polynomial (the sum of exponents). 
Hence the derivation~$\delta$ from Example~\ref{ex_th_222} is homogeneous with respect to the grading by~$K_0$. 
At the same time, according to Example~\ref{ex_th_222}, there is no nonzero locally nilpotent derivation that is homogeneous with respect to the $K$-grading. 
\end{example}

If the algebra~$\RR$ admits no nonzero $K_0$-homogeneous locally nilpotent derivation, then by Lemma~\ref{fact_hom_der_lem} $\RR$ admits no nonzero locally nilpotent derivation, i.e., the variety~$X(\g)$ is rigid. 

\begin{problem}
\label{quest_K0}
Describe all locally nilpotent derivations of a trinomial algebra~$\RR$ that are homogeneous with respect to the $K_0$-grading. 
\end{problem}

Whereas the $K$-grading corresponds geometrically to the action of a quasitorus on the trinomial hypersurface, 
the $K_0$-grading corresponds to the action of its neutral component, i.e., of the maximal torus. 
This means that Problem~\ref{quest_K0} asks for a description of $\G_a$-actions normalized by the maximal torus, 
that is, of the root subgroups in the automorphism group of a trinomial hypersurface. 

\begin{problem}
Describe all (not necessary homogeneous) locally nilpotent derivations of a trinomial algebra~$\RR$. 
\end{problem}

\end{document}